\numberwithin{equation}{section}
\theoremstyle{plain}
\newtheorem{thm}{Theorem}[section]
\newtheorem{theorem}[thm]{Theorem}
\newtheorem{lemma}[thm]{Lemma}
\theoremstyle{definition}
\newtheorem{defn}[thm]{Definition}
\theoremstyle{remark}
\newcommand{\R}{\mathbb{R}}
\newcommand{\C}{\mathbb{C}}
\renewcommand{\P}{\mathbb{P}}
\renewcommand{\H}{\mathbb{H}}
\newcommand{\Z}{\mathbb{Z}}
\renewcommand{\epsilon}{\varepsilon}
\renewcommand{\rho}{\varrho}
\renewcommand{\phi}{\varphi}
\newcommand{\1}{\mathrm{\mathbf{1}}}
\newcommand{\df}{\mathrm{d}}
\renewcommand{\div}{\mathrm{div}}
\DeclareMathOperator{\Vol}{Vol}
\DeclareMathOperator{\InjRad}{InjRad}
\DeclareMathOperator{\PSL}{PSL}
\DeclareMathOperator{\grad}{grad}
\title[Delocalisation of eigenfunctions on large genus random surfaces]{Delocalisation of eigenfunctions on large genus random surfaces}
\author[Joe Thomas]{Joe Thomas}\thanks{J. Thomas was supported by the Dean's Award from the University of Manchester.}
\address{Department of Mathematics, University of Manchester, UK}
\email{joe.thomas-3@postgrad.manchester.ac.uk}
\keywords{Hyperbolic surfaces, eigenfunctions of the Laplacian, injectivity radius, short geodesics, Selberg transform, Teichm\"uller space, Weil-Petersson volume.}
 \subjclass[2010]{58J50, 32G15}
\begin{document}
\maketitle
	\begin{abstract}
		We prove that eigenfunctions of the Laplacian on a compact hyperbolic surface delocalise in terms of a geometric parameter dependent upon the number of short closed geodesics on the surface. In particular, we show that an $L^2$ normalised eigenfunction restricted to a measurable subset of the surface has squared $L^2$-norm $\varepsilon>0$, only if the set has a relatively large size -- exponential in the geometric parameter. For random surfaces with respect to the Weil-Petersson probability measure, we then show, with high probability as $g\to\infty$, that the size of the set must be at least the genus of the surface to some power dependent upon the eigenvalue and $\varepsilon$.
	\end{abstract}

\section{Introduction}
\subsection{Background}
The study of the Laplacian operator $\Delta = -\div\grad$ has been undertaken from a multitude of different perspectives. When considered as an operator on function spaces of Riemannian manifolds, a commonplace theme has been to study the connection of properties of the eigenfunctions with respect to their eigenvalue. For example, in a quantum chaotic setting, that is, where the underlying dynamics of the geodesic flow are chaotic, there is great interest in the behaviour of the probability measures $|\psi|^2\df\mathrm{Vol}_M$. Here, $\psi$ is an $L^2$-normalised eigenfunction of the Laplacian and $\df\mathrm{Vol}_M$ is the standard volume measure on $M$. In particular, if the manifold $M$ is compact, then one can consider an orthonormal basis of $L^2(M)$ consisting of Laplacian eigenfunctions $\{\psi_{\lambda_j}\}_{j\geq 0}$ with eigenvalues $0=\lambda_0<\lambda_1\leq\ldots\to\infty$. One may then consider weak-* limits of the measures $|\psi_{\lambda_j}|^2\df\mathrm{Vol}_M$ as $j\to\infty$. An overarching conjecture by Rudnick and Sarnak \cite{Ru.Sa1994}, called the Quantum Unique Ergodicity conjecture, states that when the manifold has negative sectional curvature, these measures converge to the volume measure on the space. Essentially, this is asking whether the eigenfunctions become fully delocalised in the eigenvalue aspect. In this limit, there have been several angles of approach to demonstrating delocalisation; for example through computing $L^\infty$-norm bounds of eigenfunctions, and studying the entropy of the eigenfunction measures (see \cite{An2008, An.No2007, Br.Li2014, Iw.Sa1995}).

Rather than the eigenvalue aspect, in this article we shall consider a  delocalisation result of the eigenfunctions in the large \textit{spatial} aspect on compact hyperbolic surfaces. This allows one to understand how the eigenfunctions are affected by a large volume geometry (or equivalently, a large genus by the Gauss-Bonnet Theorem). Such a perspective is commonplace in the regular graph literature, since the spectrum of the Laplacian is bounded. Moreover, the spatial aspect has been the subject of several recent results on surfaces in part due to their spectral and geometric similarities to regular graphs. In the arithmetic surface setting, it is a natural limit  to study due to its connection with the level aspect. In this setting, some delocalisation results have been obtained by Saha \cite{Sa2020} and Hu and Saha \cite{Hu.Sa2019} in terms of the sup norms of the eigenfunctions of the form
	\begin{align*}
		 \|\psi_\lambda\|_\infty \lesssim_\lambda g^{-\alpha}\|\psi_\lambda\|_2,
	\end{align*}
for some exponent $\alpha>0$, and $g$ the genus of the surface. The notion of delocalisation that we shall explore here will seek to understand if eigenfunctions can have partial, or conversely, near full concentration, on certain subsets of a compact hyperbolic surface.\par 

Before stating our results precisely, it will be insightful to discuss some recent work that has connected Laplacian eigenfunctions to the surface geometry, as a way to highlight the different geometric influences that have so far been explored. In the work of Le Masson and Sahlsten \cite{Le.Sa2017}, a spatial analogue of quantum ergodicity for compact hyperbolic surfaces was developed via a Benjamini-Schramm type of convergence. This result is analogous to similar work on regular graphs by Anantharaman and Le Masson \cite{An.Le2015}. A sequence of compact hyperbolic surfaces $(X_n)_{n\geq 0}$ Benjamini-Schramm converges to the hyperbolic plane if for all $R>0$,
	\begin{align*}
		\frac{\mathrm{Vol}(z\in X_n: \mathrm{InjRad}_{X_n}(z)<R)}{\mathrm{Vol}(X_n)} \to 0,
	\end{align*}
as $n\to\infty$ (here $\mathrm{InjRad}_{X_n}(z)$ is the injectivity radius of the surface $X_n$ at the point $z\in X_n$). Geometrically, this means that the proportion of points on the surface with small injectivity radius, or equivalently at least one short geodesic loop based at that point, is small in the limit. For a more general notion of Benjamini-Schramm convergence, see the articles \cite{Ab.Be.Bi.Ge.Ni.Ra.Sa2017,Ab.Be.Le2018}. Benjamini-Schramm convergence can be seen as an assumption on the \textit{global} geometry of the surfaces as it requires that the geometry of the surface around \textit{most} points is `well-behaved'. It turns out that this global geometric assumption is typical for a fixed surface at appropriately chosen scales of $R$, dependent upon the surface volume/genus. More precisely, Monk \cite{Mo2020} shows that for each $g\geq 2$, there exists a subset $\mathcal{M}'_g$ of the moduli space of compact genus $g$ hyperbolic surfaces such that, for any surface $X\in \mathcal{M}_g'$ one has
	\begin{align*}
		\frac{\mathrm{Vol}(z\in X: \mathrm{InjRad}_{X}(z)<\frac{1}{6}\log(g))}{\mathrm{Vol}(X)} = O\left(g^{-\frac{1}{3}}\right).
	\end{align*}
When one considers the Weil-Petersson probability measure on the moduli space of fixed genus (we will discuss this random model in more detail later), the probability of $\mathcal{M}'_g$ tends to one as $g\to\infty$. Thus, selecting a sequence of surfaces $(X_g)_g$ where $g$ is the genus of $X_g$, and $X_g\in \mathcal{M}'_g$ for each $g$, one obtains Benjamini-Schramm convergence of this sequence, and the measure of $\mathcal{M}'_g$ approaches one as $g\to\infty$. Using this condition, Monk is able to obtain information about the structure of the Laplacian spectrum for random surfaces of large genus. \par 

This Benjamini-Schramm assumption on surfaces can be contrasted with a \textit{local} geometric assumption upon the surface geometry, that has been exploited by Gilmore, Le Masson, Sahlsten and Thomas in \cite{Gi.Le.Sa.Th2021}. The focus of their work was on the $L^p$ norms of the eigenfunctions of the Laplacian, rather than the spectrum, and how they are influenced by a large surface genus. There the authors require a strong control over the local topology around \textit{every} point on the surface, not just control around a large proportion of points as is offered by the Benjamini-Schramm condition. Specifically, they require that every point on the surface is the base of only a small number of `short' primitive geodesic loops. By comparison, the Benjamini-Schramm condition roughly states that the proportion of points that are the base of at \textit{least} one `short' geodesic loop is small compared to the surface volume.\par 

 The reason why this control over all points on the surface is beneficial, is highlighted when using the Selberg pre-trace formula to infer properties about Laplacian eigenfunctions. Indeed, understanding the behaviour of an eigenfunction at a certain point with this formula requires one to look at all the geodesic loops on the surface based at that point. On the other hand, being a global property, the Benjamini-Schramm condition seems more suited to understanding properties of the \textit{spectrum} of the Laplacian. This is because one can use the Selberg trace formula (the integral of the pre-trace formula) to link the spectrum to an integral over the surface of a function evaluated at lengths of geodesic loops on the surface. Due to the presence of the integral, one only requires that the geodesic loops are well-behaved at \textit{most} points on the surface.\par 

In this article, we are dealing with properties of the eigenfunctions of the Laplacian, and again require strong control over the local topology of all points on the surface. For this reason, we will utilise the geometric condition for surfaces that was introduced by Gilmore, Le Masson, Sahlsten and Thomas, and this is written precisely in the statement of equation \eqref{assumption} below. It turns out that the length scale at which one can understand this local topology of points corresponds greatly to the strength of the results for the eigenfunctions. Indeed, this scale corresponds to the parameter $R(X)$ in equation \eqref{assumption} below, and from Theorem \ref{thm: det thm 1}, the larger that this can be taken, the more we can infer about eigenfunction concentration. Here, we will consider estimates for this scale for both deterministic surfaces, and those chosen with high probability as the genus of the surface tends to infinity, with respect to the Weil-Petersson random model. To aid in the understanding of how large the length scale can be, it is beneficial for us to utilise another geometric property, from which one can directly infer the geometric loop properties at every point. Indeed, this is the perspective taken by Monk and Thomas in \cite{Mo.Th2020} where the \textit{tangle-free parameter} of a surface is introduced, leading to more precise length scales. We introduce this parameter now.

\begin{defn}
Given $L>0$, a compact hyperbolic surface $X$ is said to $L$-tangle-free if every embedded pair of pants and one-holed torus in $X$ has total boundary length at least $2L$.
\end{defn}

Recall that a pair of pants is a hyperbolic surface of genus zero with three simple closed geodesic boundaries, and a one-holed torus is a genus 1 hyperbolic surface with a single simple closed geodesic boundary. When we consider total boundary length, we will mean the lengths of these geodesic boundaries on the subsurfaces. \par 
Although stated in terms of pants and one-holed tori (the fundamental building blocks of a hyperbolic surface), the tangle-free parameter $L$ of a surface provides understanding on the local topology of the surface around all points, as is required here. Indeed, this is highlighted in the following theorem.

\begin{thm}[{\cite[Theorem 9]{Mo.Th2020}}]
\label{thm:monkthomas}
Suppose that $X$ is an $L$-tangle-free surface, and let $z\in X$ with $\delta_z$ a geodesic loop based at $z$ of shortest length, $\ell(\delta_z)$. Then, any (not necessarily geodesic) loop $\beta$ based at $z$, whose length $\ell(\beta)$ satisfies
	\begin{align*}
		\ell(\beta)+\ell(\delta_z) < L,
	\end{align*}
is homotopic with fixed endpoints to a power of $\delta_z$.
\end{thm}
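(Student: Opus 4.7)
The plan is to argue the contrapositive: assume $\beta$ is not homotopic (with fixed endpoints) to any power of $\delta_z$, and deduce that $\ell(\beta) + \ell(\delta_z) \geq L$. I will do this by producing an embedded pair of pants or one-holed torus in $X$ whose total geodesic boundary length is strictly less than $2L$, contradicting the $L$-tangle-free hypothesis.

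Write $a = [\delta_z]$ and $b = [\beta]$ in $\pi_1(X, z)$. The assumption says $b \notin \langle a \rangle$. I would first verify that $H := \langle a, b \rangle$ is \emph{non-cyclic}: if instead $a, b \in \langle c \rangle$ for some common $c \in \pi_1(X, z)$, then the minimality of $\delta_z$ among geodesic loops based at $z$ rules out $c$ being represented by a loop shorter than $\delta_z$, and a case analysis on the exponents forces $b$ to be a power of $a$, a contradiction. Since $\pi_1(X)$ is a torsion-free hyperbolic surface group containing no copy of $\mathbb{Z}^2$, any non-cyclic two-generator subgroup is free of rank $2$; thus $H \cong F_2$ with free basis $\{a, b\}$.

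Next I would build the candidate subsurface. Perturb $\beta$ to be in transverse general position with $\delta_z$ away from $z$, and let $W = \delta_z \cup \beta \subset X$ be the resulting wedge at $z$. Take $Y$ to be a regular neighborhood of $W$ in $X$. Since $Y$ deformation retracts onto $W$, one has $\pi_1(Y) \cong F_2$, and since $X$ is orientable, the classification of compact surfaces forces $Y$ to be either a three-holed sphere or a one-holed torus. The boundary components of $Y$ are freely homotopic in $X$ to $a, b, ab$ (pair of pants case) or to the single commutator $[a, b]$ (one-holed torus case). Passing to geodesic representatives and using that a product word in $\delta_z^{\pm 1}$ and $\beta^{\pm 1}$ is represented by a loop whose length is bounded by the sum of the relevant $\ell(\delta_z)$'s and $\ell(\beta)$'s, the total geodesic boundary length of $Y$ in $X$ is in either case at most $2(\ell(\delta_z) + \ell(\beta)) < 2L$.

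The crucial — and, I expect, main — obstacle is showing that $Y$ is actually \emph{embedded} in $X$, so that the tangle-free hypothesis applies. Topologically the regular neighborhood of $W$ is always an (a priori only immersed) surface with free rank-$2$ fundamental group, and self-overlaps could spoil injectivity of $Y \hookrightarrow X$. To exclude them I would exploit the two ingredients at hand: both $\ell(\delta_z)$ and $\ell(\beta)$ are less than $L$, and $\delta_z$ is \emph{shortest} among geodesic loops at $z$. A Margulis-type collar argument together with the observation that any self-overlap would either yield a geodesic loop at $z$ shorter than $\delta_z$ in a new homotopy class, or else produce an even smaller embedded tangle in $X$, should force $Y$ to embed. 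Once embedding is established, the length bound from the previous paragraph contradicts $L$-tangle-freeness, completing the proof.
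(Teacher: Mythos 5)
The paper does not give its own proof of this theorem; it is cited from Monk--Thomas \cite{Mo.Th2020}. Evaluating your proposal on its own merits, the central step has a genuine gap.

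You take $W = \delta_z \cup \beta \subset X$, perturb to transverse general position away from $z$, let $Y$ be a regular neighbourhood of $W$, and assert that $\pi_1(Y) \cong F_2$, so that $Y$ is a pair of pants or a one-holed torus. This is only correct when $\delta_z$ and $\beta$ are each simple and meet solely at $z$. In general, even after replacing $\beta$ by its fixed-endpoint geodesic representative, $\beta$ may be non-simple and may cross $\delta_z$ at $k \geq 1$ additional points. Then $W$ is a $4$-valent graph with $V = k+1$ vertices and $E = 2(k+1)$ edges, so $\chi(W) = -(k+1)$ and $\pi_1(Y) \cong F_{k+2}$. For $k \geq 1$ the neighbourhood $Y$ has Euler characteristic $<-1$ and is neither a pair of pants nor a one-holed torus; the identification of the boundary components with $a$, $b$, $ab$ or $[a,b]$ also fails, and with it the bound $2(\ell(\delta_z)+\ell(\beta))$ on the total geodesic boundary length. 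You therefore need some reduction to a rank-two essential subsurface carrying $\langle a,b\rangle$ (by compressions/filling inessential boundary, or by passing to the convex core of the cover $\langle a,b\rangle\backslash\H$ and proving it embeds), and that is where the real work lies --- it is not addressed in the proposal.

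Relatedly, the worry about ``whether $Y$ is embedded in $X$'' is misdirected for a regular neighbourhood: $N(W)$ is by construction a subset of $X$ and hence embedded. What does require an argument is that the version with \emph{geodesic} boundary, which is what the tangle-free hypothesis concerns, is still embedded. For a $\pi_1$-injective embedded subsurface this is a standard isotopy-to-geodesic-boundary fact, and passing to geodesic representatives only shortens boundary lengths; but $\pi_1$-injectivity of $Y\hookrightarrow X$ is precisely what fails when $\chi(Y) < -1$, so this is the same gap in different clothing rather than a separate Margulis-collar issue. Your argument that $\langle a,b\rangle$ is non-cyclic, and hence free of rank two, via the primitivity of $a$ forced by minimality of $\delta_z$ is sound, but note it describes the image of $\pi_1(Y)$ in $\pi_1(X)$, not $\pi_1(Y)$ itself.
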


In other words, if the injectivity radius of an $L$-tangle-free surface at a point is less than $\frac{L}{2}$, then the shortest geodesic loop $\delta_z$ based at that point is unique. Furthermore, any other geodesic loop based at that point with length less than $\frac{L}{2}$, is homotopic with endpoints fixed at $z$ to a power of $\delta_z$. This means that the topology of the $\frac{L}{2}$-neighbourhood of any point on such a surface is well understood. \par 

In the next subsection, we will state precisely how this length scale corresponds to the required local geometric condition for this article. Of course, understanding this correspondence is only useful if one can obtain estimates on how large the parameter $L$ can be. Deterministically, notice that every surface is $L$-tangle-free for $L$ at least $\mathrm{InjRad}(X)$, the injectivity radius of the surface. This is because the total boundary length of any pair of pants and one-holed tori embedded in the surface is at least $6\mathrm{InjRad}(X)$, and $2\mathrm{InjRad}(X)$ respectively. For a surface chosen at random from the moduli space of genus $g$ with respect to the Weil-Petersson model, one may take $L= c\log(g)$ for any $0<c<1$, with probability tending to $1$ as $g\to\infty$. Further details of this will be discussed in Subsection \ref{subsec: random}. \par

\subsection{Deterministic Delocalisation} 
Let us now state precisely the delocalisation result that we prove in this article. The type of delocalisation that we will examine here answers the following: suppose an eigenfunction carries some $L^2$ mass on a subset of the surface, what information can be deduced about such a subset? Inspired by analogous results obtained for regular graphs in \cite{Br.Li2013, Br.Li2014, Ga.Sr2019}, we address how large such a subset can be in terms of the genus of the surface. Recall that, in this setting, the genus is an equivalent parameter to the volume by the Gauss-Bonnet Theorem.\par 

In a near fully delocalised case, an eigenfunction would assign a mass of the order $\frac{1}{\sqrt{g}}$ across the whole manifold $X$ (due to the $L^2$ normalisation), where $g$ is the genus of $X$. Thus, on a measurable subset $E\subseteq X$, one should expect to see the $L^2$ norm of the eigenfunction restricted to this set, to be of an order proportional to the size of the set itself. In other words, if $E$ were a subset such that $\|\psi_\lambda\1_E\|_2^2 = \varepsilon$, then one would expect a bound of the form
	\begin{align*}
		\Vol(E) \geq C\varepsilon g,
	\end{align*}
for some constant $C$, independent of the genus $g$ and the mass $\varepsilon$ (when $g$ is considered large enough). \par 
What is obtained in this article, is a drop in the exponent of the genus, dependent upon the eigenvalue of the eigenfunction, and the mass $\varepsilon$. This result is obtained for surfaces chosen with respect to the Weil-Petersson model with probability tending to one as $g\to\infty$. To achieve a result of this form, we begin with a lower bound on the volume of $E$ holding for all surfaces. This lower bound can be understood in terms of the $L$-tangle-free parameter above, or more generally, the parameter $R(X)$ associated with equation \eqref{assumption} below. Then, we use probabilistic estimates for these parameters to obtain bounds in terms of the genus. We will discuss the parameter $R(X)$ now, and contrast it to a similar parameter used for regular graphs.\par

\subsubsection*{Geometric Parameter}

For regular graphs, Brooks and Lindenstrauss \cite{Br.Li2013} prove that if a graph Laplacian eigenfunction has some $L^2$ mass on a subset of the vertices, then this subset is bounded below in terms of the size of the graph and the mass on the subset. The starting point for this result is the introduction of a geometric parameter that provides a length scale at which there are few distinct, non-backtracking walks between any two vertices in the graph shorter than this length. In particular, this can be deduced from bounds on the number of cycles based at a point in the graph, whose length are controlled by a similar length scale. This is analogous to the control offered by the tangle-free parameter discussed above on the geodesic loops based at any point on the surface. In fact, the exact formulation of the geometric property that we require here is a combinatorial bound on the number of geodesic paths between points on the surface. And, as is the case with graphs, this can be inferred from similar combinatorial bounds on the number of geodesic loops based at points. \par 
More precisely, we require that for a compact hyperbolic surface $X= \Gamma\backslash\H$, there exist constants $R(X)>0$ and $C(X)>0$ such that for all $\delta>0$, there exists $C_0(\delta)>0$ such that for any $z,w\in \mathbb{H}$ one has
	\begin{align}
	\label{assumption}
		|\{\gamma\in\Gamma: d(z,\gamma w)\leq r\}| \leq C(X)C_0(\delta)e^{\delta r}, \quad \text{for all }  r\leq R(X).
	\end{align}
Of course, one can always find such parameters for any surface $X$: take $R(X) = c\mathrm{InjRad}(X)$ for any $c<1$, $C(X) = 2$ and $C_0(\delta) = 1$. Indeed, for $r\leq R(X)$ there can be at most two elements in the set on the left-hand side otherwise one would obtain a geodesic loop on the surface of length shorter than $\mathrm{InjRad}(X)$. The crucial point is that the constant $R(X)$ represents the length scale up to which we can understand the local geometry about every point on the surface (this is highlighted in Lemma \ref{lem:setbound}). This means, it will be the controlling parameter for the lower bound on the volume of a set $E$ (see the statement of Theorem \ref{thm: det thm 1} for the precise relation). The strength of the theorem thus relies on one being able to take $R(X)$ as large as possible. In fact, we will wish for the parameter $R(X)$ to grow in terms of genus of the surface. In the Weil-Petersson model, surfaces can have small injectivity radius with positive probability (also see Theorem \ref{thm:mirzakhanilemma} below), and so the idea is to ensure that we can go past the injectivity radius scale for typical surfaces. \par 

Pushing past the injectivity radius scale can possibly require the combinatorial bound of equation \eqref{assumption} to have some dependence upon the surface itself, which is why we allow the parameter $C(X)$ to appear there. If one is not so careful, this can cause some problems when observing how the constant $C(X)$ manifests itself in the lower bound on the volume of the set $E$ in Theorem \ref{thm: det thm 1}. Thus, one must ensure that $C(X)$ is well understood, so that it will not ruin the obtained bounds. Using the tangle-free parameter allows us to find constants $R(X)$ and $C(X)$ that can be studied probabilistically, and that result in lower bounds on the volume of the set $E$ in terms of the genus. Indeed, we shall show in Lemma \ref{lem:setbound} that for an $L$-tangle-free surface, one may take $R(X)=\frac{L}{4}$ and $C(X)=\frac{1}{\min\{1,\mathrm{InjRad}(X)\}}$, both of which can be understood probabilistically as in Subsection \ref{subsec: random}. For these parameters, the constant $C_0(\delta)$ can be stated explicitly as in Lemma \ref{lem:setbound}, but its precise value is unimportant to our discussion here.\par 

The deterministic result that we obtain is split into two components. First, there is the case of tempered eigenfunctions that have eigenvalue in $[\frac{1}{4},\infty)$. These are dealth with by using a similar approach to Brooks and Lindenstrauss \cite{Br.Li2013} and Ganguly and Srivastava \cite{Ga.Sr2019} on regular graphs, through what can be seen as a smoothed cosine wave propagation operator. The untempered eigenfunctions, whose eigenvalues are in $(0,\frac{1}{4})$, are analysed through a rescaled ball averaging operator, and we can actually obtain a stronger delocalisation result in this case. 

\begin{thm}
\label{thm: det thm 1}
Let $\varepsilon>0$ be given, and suppose that $X$ is a compact hyperbolic surface with constants $R(X)$ and $C(X)$ satisfying condition \eqref{assumption}. Suppose that $\psi_\lambda$ is an $L^2$-normalised eigenfunction of the Laplacian on $X$ with eigenvalue $\lambda$, and suppose that $E\subseteq X$ is a measurable set for which
	\begin{align*}
		\|\psi_\lambda\1_E\|_2^2 =\varepsilon.
	\end{align*}
For $R(X)$ sufficiently large, if $\lambda \geq \frac{1}{4}$, there exists a universal constant $C>0$, and a constant $d(\lambda)>0$ for which
	\begin{align*}
		\mathrm{Vol}(E) \geq \frac{C\varepsilon}{C(X)} e^{d(\lambda) \varepsilon R(X)},
	\end{align*}
Moreover, if $\lambda \in (0,\frac{1}{4}-\sigma)$, then there exists a universal constant $C>0$ such that for $R(X)$ sufficiently large,
	\begin{align*}
		\mathrm{Vol}(E) \geq \frac{C\varepsilon}{C(X)} e^{(\frac{1}{4}+\frac{1}{2}\sqrt{\sigma})R(X)}.
	\end{align*}
\end{thm}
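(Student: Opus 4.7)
My plan is to encode the eigenfunction identity as a bilinear estimate on a suitable $\Gamma$-automorphic integral operator, and to bound the resulting bilinear form using the geometric hypothesis \eqref{assumption}. Fix a nonnegative radial function $k:[0,\infty)\to\R$ with $\mathrm{supp}(k)\subseteq[0,R(X)]$ and form the automorphic kernel $K_X(z,w)=\sum_{\gamma\in\Gamma}k(d(z,\gamma w))$, with associated integral operator $T_K$ on $L^2(X)$. Because $\psi_\lambda$ is a Laplace eigenfunction, $T_K\psi_\lambda=h(r)\psi_\lambda$, where $h$ is the Selberg transform of $k$ at the spectral parameter $r$ of $\lambda$. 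Self-adjointness of $T_K$ then gives
\begin{align*}
|h(r)|\,\varepsilon \;=\; |\langle T_K\psi_\lambda,\psi_\lambda\mathbf{1}_E\rangle| \;\leq\; \|T_K(\psi_\lambda\mathbf{1}_E)\|_2,
\end{align*}
reducing the problem to an upper bound on $\|T_K(\psi_\lambda\mathbf{1}_E)\|_2$ in terms of $\mathrm{Vol}(E)$.

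To estimate that $L^2$ norm I would use pointwise Cauchy-Schwarz together with a Schur-type bound: writing $|T_K(\psi_\lambda\mathbf{1}_E)(z)|^2\leq\varepsilon\int_E|K_X(z,w)|^2\,dw$ and integrating, the crucial geometric input is the pointwise inequality
\begin{align*}
\|K_X\|_\infty \;\leq\; \|k\|_\infty\,C(X)\,C_0(\delta)\,e^{\delta R(X)},
\end{align*}
which is exactly the hypothesis \eqref{assumption} applied at $r=R(X)$. Combined with the polar-coordinate estimate $\|k\|_{L^1(\H)}\lesssim\|k\|_\infty e^{R(X)}$, this produces an inequality of the schematic shape
\begin{align*}
\mathrm{Vol}(E) \;\gtrsim\; \frac{|h(r)|^2\,\varepsilon}{\|k\|_\infty^2\,C(X)\,C_0(\delta)\,e^{(1+\delta)R(X)}}.
\end{align*}
The remaining task is to design, in each spectral regime, a kernel $k$ that maximises $|h(r)|/\|k\|_\infty$ subject to the support constraint $\mathrm{supp}(k)\subseteq[0,R(X)]$.

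In the untempered regime $\lambda=\tfrac14-s^2$ with $s>\sqrt\sigma$, a rescaled ball-indicator $k=\mathbf{1}_{[0,R(X)]}$ should suffice: the dominant exponential of the spherical function $\varphi_{is}$ supplies $|h(is)|\sim e^{(1/2+s)R(X)}$, and plugging in (after optimising $\delta$) gives the stated bound $\mathrm{Vol}(E)\gtrsim\varepsilon e^{(1/4+\sqrt\sigma/2)R(X)}/C(X)$. For the tempered regime $\lambda\geq\tfrac14$, the ball kernel has oscillatory, non-amplifying Selberg transform, so I would instead take a smoothed cosine wave propagator of time at most $R(X)$, using that the wave equation on $\H$ has finite propagation speed; this is the continuous analogue of the Chebyshev-polynomial kernels used by Brooks-Lindenstrauss and Ganguly-Srivastava on regular graphs. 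I expect this tempered case to be the main obstacle: a single wave propagator gives only polynomial-in-$R(X)$ amplification through stationary phase, and producing the exponential factor $e^{d(\lambda)\varepsilon R(X)}$ requires a Chebyshev-type iteration of the propagator whose supports telescope inside $[0,R(X)]$, each iterate gaining a factor governed by the mass of $\psi_\lambda$ returning to $E$, with the constant $d(\lambda)$ made explicit uniformly in $\lambda\geq 1/4$.
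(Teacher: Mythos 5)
Your proposal is a genuinely different route from the paper's, and the difference matters: the paper's argument never uses the inequality $|h(s_\lambda)|\varepsilon\leq\|T_K(\psi_\lambda\1_E)\|_2$. Instead, it lower-bounds the \emph{quadratic} form $\langle W(\psi_\lambda\1_E),\psi_\lambda\1_E\rangle$ by expanding $\psi_\lambda\1_E$ over the full orthonormal Laplacian basis, using that $W$ has spectral action $\geq -1$ on the tempered piece, $\geq 0$ on the untempered piece, and $\geq\varepsilon^{-1}$ at $\lambda$ (the Fej\'er-kernel positivity after choosing $N\sim\varepsilon^{-1}$); the upper bound is by H\"older, $\langle W\phi,\phi\rangle\leq\|W\|_{L^1\to L^\infty}\|\phi\|_1^2\leq\|W\|_{L^1\to L^\infty}\Vol(E)\varepsilon$. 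Your framework replaces the quadratic form by a bilinear pairing against $\psi_\lambda$ itself and a Schur-type bound on $\sup_w\int_X|K_X(z,w)|^2\,dz$, and discards the spectral-decomposition step entirely.

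The tempered case is a genuine and, I think, unrepairable gap in your framework, not merely an unfinished detail. Your schematic inequality requires $|h(s_\lambda)|^2/\bigl(\|K_X\|_\infty\,\|k\|_{L^1(\H)}\bigr)$ to grow exponentially in $R(X)$. But for \emph{any} nonnegative radial $k$ supported in $[0,R]$, the associated $g$ is supported in $[-R,R]$ and satisfies $|g(u)|\leq 2\sqrt{2}\,\|k\|_\infty\sqrt{\cosh R-\cosh u}$, so for real $s_\lambda$ (i.e.\ $\lambda\geq\tfrac14$) one has $|h(s_\lambda)|\leq\int_{-R}^R|g|\lesssim R\,\|k\|_\infty e^{R/2}$. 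Meanwhile $\|k\|_{L^1(\H)}\leq 2\pi\|k\|_\infty(\cosh R-1)\sim\|k\|_\infty e^R$ and $\|K_X\|_\infty\geq k(0)$; even being generous and only charging the estimate $\|K_X\|_\infty\|k\|_{L^1}\leq\|k\|_\infty^2 C(X)C_0(\delta)e^{(1+\delta)R}$, your bound gives $\Vol(E)\gtrsim R^2\varepsilon\,e^{-\delta R}/C(X)$, which decays in $R$. This is not a failure of the ball kernel in particular: it is a Paley--Wiener obstruction, and one can see the same obstruction on the physical side by writing $\sup_w\int_X|K_X(\cdot,w)|^2=\sup_w\sum_\gamma(k*k)(d(w,\gamma w))\geq\|k\|_{L^2(\H)}^2$ and invoking Plancherel. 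The ``Chebyshev-type iteration'' you allude to cannot be implemented inside your framework because the obstruction is on $|h|$ at a single real point relative to the $L^2$-mass of the kernel; what the paper actually does is fundamentally different -- it uses that $\psi_\lambda\1_E$ has a large projection onto $\psi_\lambda$ itself (size $\varepsilon$), so that the Fej\'er peak of height $\sim N$ at $s_\lambda$ is weighted by $\varepsilon^2$ while the $-1$ floor elsewhere is weighted only by $\varepsilon-\varepsilon^2$; your Cauchy--Schwarz against $\psi_\lambda$ (with $\|\psi_\lambda\|_2=1$) throws this information away. The $\varepsilon$-dependent exponent $d(\lambda)\varepsilon R(X)$ in the theorem is precisely the footprint of this balancing, and it has no analogue in your scheme.

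There is also a smaller discrepancy in the untempered case. Your computation with the ball indicator and $|h(is)|\sim e^{(s+1/2)R}$ gives, after your Schur estimate, $\Vol(E)\gtrsim\frac{\varepsilon}{C(X)}e^{(2s-\delta)R}\geq\frac{\varepsilon}{C(X)}e^{(2\sqrt\sigma-\delta)R}$, not the claimed $e^{(\frac14+\frac12\sqrt\sigma)R}$. Since $2\sqrt\sigma<\tfrac14+\tfrac12\sqrt\sigma$ whenever $\sigma<1/36$, your bound is strictly weaker than the statement in that range (and no choice of $\delta$ fixes this; the exponent is capped at $2\sqrt\sigma$). The paper's sharper exponent again comes from the quadratic-form/spectral-decomposition route: the rescaled ball average $B_{R,\lambda}$ has $L^1\to L^\infty$ norm $\lesssim C(X)e^{(\delta-\tfrac12-\tfrac12\sqrt\sigma)R}$ and spectral value $\geq\varepsilon^{-1}$ at $\lambda$, giving $\Vol(E)\geq\varepsilon/\|B_{R,\lambda}\|_{L^1\to L^\infty}$, whose exponent $\tfrac12+\tfrac12\sqrt\sigma-\delta$ beats $2\sqrt\sigma$ for small $\sigma$ after the choice $\delta=\tfrac14$. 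So while the untempered half of your argument is internally sound (a valid, if different, delocalisation bound), it does not establish the theorem as stated, and the tempered half does not go through at all.
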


The constant $d(\lambda)$ above is made explicit later (see Theorem \ref{thm: deterministic result}). This result in particular shows that the eigenfunctions can not be large on a small set if (for instance) the $L$-tangle-free parameter of the surface is large compared to the eigenvalue (taking $R(X)=\frac{L}{4}$). 

\subsection{Random Surface Delocalisation}
\label{subsec: random}
Let us now discuss how one can use Theorem \ref{thm: det thm 1} to obtain a probabilistic result in terms of the genus/volume of the surface as desired. For this, we shall first describe the construction of the Weil-Petersson random surface model that we shall employ; a more detailed account can be found in \cite{Im.Ta1992, Mi2007a, Mi2013}. Note that other \textit{distinct} random surface constructions could also be considered, such as a random triangulations model by Brooks and Makover \cite{Br.Ma2004}, and a random cover model by Magee, Naud and Puder \cite{Ma.Na.Pu2020}. It would be interesting to see if similar results to those presented here could be realised in these models.\par 
Fix a genus $g\geq 2$ and let $\mathcal{T}_g$ denote the Teichm\"{u}ller space of marked genus $g$ closed Riemann surfaces up to marking equivalence. Then, there is a $(6g-6)$-dimensional real-analytic structure on $\mathcal{T}_g$ which carries a symplectic form $\omega_{\mathrm{WP}}$ called the \textit{Weil-Petersson form}. One obtains a volume form on $\mathcal{T}_g$ by taking a $(3g-3)$-fold wedge product of $\omega_{\mathrm{WP}}$ and normalising by $(3g-3)!$. In addition to this volume structure, there is a natural group acting on $\mathcal{T}_g$ called the \textit{mapping class group}, denoted by $\mathrm{MCG}_g$, which acts by changing the marking on a point in $\mathcal{T}_g$. The \textit{moduli space} of genus $g$ is then defined as the quotient by this action:
	\begin{align*}
		\mathcal{M}_g = \mathcal{T}_g/\mathrm{MCG}_g.
	\end{align*}
This space can be thought of as the collection of hyperbolic metrics that can be endowed on a genus $g$ surface, identified up to isometry. An important feature of the Weil-Petersson volume form defined on $\mathcal{T}_g$ is that it is invariant under the action of $\mathrm{MCG}_g$, and so it descends naturally to the moduli space. With respect to this measure, the moduli space has finite volume (see \cite{Bu2010} for an upper bound, and \cite{Mi.Zo2015} for more specific asymptotics of this volume for large genus). This allows one to define a probability measure on $\mathcal{M}_g$ called the \textit{Weil-Petersson probability measure}, and calculate probabilities in the natural way:
	\begin{align*}
		\P_g^{\mathrm{WP}}(A) = \frac{1}{\mathrm{Vol}(\mathcal{M}_g)}\int_{\mathcal{M}_g} \1_A(X)\df X,
	\end{align*}
where $\df X$ is used to denote the volume form. Commonly, one takes $A$ to be a collection of surfaces satisfying some desired geometric property. By using integration tools and volume estimates obtained by Mirzakhani \cite{Mi2007,Mi2007a,Mi2013}, one is able to obtain upper bounds for these probabilities as functions of the genus, and determine events that hold with high probability as $g\to\infty$.\par 
Recall that the probabilistic result that we require is an estimate for the surface dependent parameters $R(X)$ and $C(X)$ pertaining to the condition in equation \eqref{assumption}. We will use Lemma \ref{lem:setbound} to take $R(X)=\frac{L}{4}$, where $L$ is the tangle-free parameter of a surface, and $C(X) = \frac{1}{\min\{1,\mathrm{InjRad}(X)\}}$. We then have the following probabilistic estimate for $L$ from Monk and Thomas in \cite{Mo.Th2020}. 

\begin{thm}[{\cite[Theorem 4]{Mo.Th2020}}]
\label{thm: GLMST19}
For any $0<c<1$, one has 
	\begin{align*}
		\P_g^{\mathrm{WP}}\left(X\in\mathcal{M}_g : X\ \text{is } c\log(g)\text{-tangle-free}\right) = 1-O\left(\frac{\log(g)^2}{g^{1-c}}\right),
	\end{align*}
as $g\to \infty$. 
\end{thm}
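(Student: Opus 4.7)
The plan is to prove Theorem~\ref{thm: GLMST19} by a first moment argument. A surface $X$ fails to be $L$-tangle-free for $L = c\log(g)$ precisely when $X$ contains either (i) an embedded one-holed torus whose single boundary geodesic has length less than $2L$, or (ii) an embedded pair of pants whose three pairwise disjoint simple closed boundary geodesics have total length less than $2L$. Writing $N(X)$ for the total number of such ``short'' embedded subsurfaces, Markov's inequality gives
\begin{equation*}
\P_g^{\mathrm{WP}}\left(X \text{ is not } L\text{-tangle-free}\right) \leq \E_g^{\mathrm{WP}}[N(X)],
\end{equation*}
so the goal reduces to proving $\E_g^{\mathrm{WP}}[N(X)] = O(\log(g)^2/g^{1-c})$.

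Next I would enumerate the relevant topological configurations and apply Mirzakhani's integration formula to each. For the one-holed torus case, the boundary is a separating simple closed geodesic cutting $X$ into pieces of signature $(1,1)$ and $(g-1,1)$. For the pair of pants case, the three boundary curves can be jointly non-separating, giving a connected complement of signature $(g-2,3)$, or some subset of them can separate, producing a complement with two or three pieces of lower genus. In every case Mirzakhani's formula expresses the expected count as
\begin{equation*}
\frac{1}{|\mathrm{Sym}|\, V_g} \int_{\sum x_i < 2L} V_{g_0,n_0}(\mathbf{x}) \prod_j V_{g_j,n_j}(\mathbf{x}) \prod_i x_i\, d\mathbf{x},
\end{equation*}
where $(g_0,n_0) \in \{(1,1),(0,3)\}$ records the topology of the embedded subsurface and the remaining $V_{g_j,n_j}$ factors correspond to the complementary pieces.

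To control these integrals I would combine two standard inputs: Mirzakhani's inequality $V_{g,n}(L_1,\ldots,L_n) \leq V_{g,n}(0) \prod_i \frac{2\sinh(L_i/2)}{L_i}$, which cancels the $x_i$ factors coming from the integration formula, and the Mirzakhani--Zograf large-genus asymptotics, which give $V_{g-1,1}(0)/V_g = O(1/g)$ and $V_{g-2,3}(0)/V_g = O(1/g)$, with disconnected complements producing ratios of order $1/g^2$ or smaller. The torus integral then reduces essentially to $\int_0^{2L}(x^2 + 4\pi^2)\sinh(x/2)\,dx = O(L^2 e^L)$, while for the pants case, passing to the variable $u = x_1+x_2+x_3$ (whose level sets inside the positive octant have area $u^2/2$) yields $\int_0^{2L} u^2 e^{u/2}\, du = O(L^2 e^L)$. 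Multiplying by the $O(1/g)$ volume prefactor and substituting $L = c\log(g)$ produces the target bound $O(\log(g)^2 g^{c-1})$.

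The main obstacle will be the enumeration step for pairs of pants: one must catalogue all topological types in which one or more of the three boundary curves separate, track the symmetry factors $|\mathrm{Sym}|$ correctly, and confirm that the Mirzakhani--Zograf asymptotics apply uniformly across every such splitting as $g \to \infty$. The silver lining is that further splittings only gain additional powers of $1/g$ in the volume ratio (as the sums over partitions $g_1+g_2 = g-1$ of factorial ratios like $\frac{(2g_1-2)!(2g_2-1)!}{(2g-3)!}$ are $O(1/g^2)$), so the dominant contribution genuinely comes from the connected-complement cases computed above, and the remaining work is bookkeeping to confirm that the finite number of topological types contributes only constant multiplicative losses.
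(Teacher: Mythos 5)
This theorem is not proved in the present paper at all; it is quoted verbatim from Monk--Thomas as \cite[Theorem 4]{Mo.Th2020}, so there is no in-paper argument against which to check yours. That said, your sketch is the right one and, to the best of my knowledge, follows the same route as the cited proof: a first-moment bound via Markov on the number of short embedded one-holed tori and pairs of pants, Mirzakhani's integration formula to turn the expectation into integrals of Weil--Petersson volume polynomials over the relevant length domains, Mirzakhani's $\sinh$-comparison inequality to cancel the $x_i$ Jacobian factors, and Mirzakhani--Zograf asymptotics to produce the $O(1/g)$ volume ratio for the dominant connected-complement types $(g-1,1)$ and $(g-2,3)$, with separating configurations contributing $O(1/g^2)$ after summing over partitions. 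The $\int_0^{2L} u^2 e^{u/2}\,du = O(L^2 e^L)$ estimate with $L=c\log g$ then gives $O\bigl(\log(g)^2 g^{c-1}\bigr)$ as required, and the restriction $0<c<1$ is exactly what makes this vanish. The bookkeeping gaps you flag yourself (symmetry factors, a complete catalogue of separating configurations, and in particular the degenerate configurations where two pants boundary curves are identified on the surface — which one must argue are either excluded by the definition of ``embedded'' or absorbed by the one-holed torus count) are real but non-structural; they affect constants, not the exponent of $g$.
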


For the parameter $C(X)$, it suffices to estimate the injectivity radius. The following result of Mirzakhani is sufficient for our purposes.

\begin{thm}[{\cite[Theorem 4.2]{Mi2013}}]
\label{thm:mirzakhanilemma}
For any $a>0$,
	\begin{align*}
		\P_g^{\mathrm{WP}}(X: \InjRad(X)\geq g^{-a}) = 1-O(g^{-2a}),
	\end{align*}
as $g\to\infty$.
\end{thm}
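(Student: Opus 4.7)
The plan is to convert the injectivity-radius event into an event about the existence of a short closed geodesic, and then bound the expected number of such geodesics using Mirzakhani's integration formula.

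Since $X$ is a compact hyperbolic surface, $\InjRad(X) = \tfrac{1}{2}\ell_{\mathrm{sys}}(X)$, where $\ell_{\mathrm{sys}}(X)$ denotes the length of the systole. Setting $\epsilon := 2g^{-a}$, this gives
$$\P_g^{\mathrm{WP}}\bigl(\InjRad(X) < g^{-a}\bigr) = \P_g^{\mathrm{WP}}\bigl(N_\epsilon(X) \geq 1\bigr) \leq \E_g^{\mathrm{WP}}[N_\epsilon(X)],$$
where $N_\epsilon(X)$ denotes the number of primitive closed geodesics on $X$ of length at most $\epsilon$, and the second inequality is Markov's. It therefore suffices to prove $\E_g^{\mathrm{WP}}[N_\epsilon(X)] = O(\epsilon^2)$.

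I would then decompose $N_\epsilon(X) = N_\epsilon^{\mathrm{ns}}(X) + \sum_{h=1}^{\lfloor g/2\rfloor} N_\epsilon^{\mathrm{sep},h}(X)$ according to whether the short geodesic is non-separating, or separates $X$ into two subsurfaces of genera $h$ and $g-h$ (a short enough closed geodesic is automatically simple by the collar lemma). For each topological type $T$, Mirzakhani's integration formula delivers
$$\int_{\mathcal{M}_g} N_\epsilon^T(X)\, dX \;=\; C_T \int_0^\epsilon L\, V_T(L)\, dL,$$
where $V_T(L)$ is a product of Weil-Petersson volumes of moduli spaces of bordered surfaces with geodesic boundaries of length $L$ (namely $V_{g-1,2}(L,L)$ in the non-separating case, and $V_{h,1}(L)\,V_{g-h,1}(L)$ in the separating case with split $(h,g-h)$). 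Since each $V_T$ is an even polynomial with nonnegative coefficients, the right-hand side is bounded above by a constant multiple of $V_T(0)\,\epsilon^2$ for small $\epsilon$, which after dividing by $\mathrm{Vol}(\mathcal{M}_g)$ leads to an expected-value contribution of order $\epsilon^2 \cdot V_T(0)/\mathrm{Vol}(\mathcal{M}_g)$.

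The main difficulty will be showing that $\sum_T V_T(0)/\mathrm{Vol}(\mathcal{M}_g)$ is bounded uniformly in $g$. For the non-separating piece this amounts to $V_{g-1,2}(0,0)/V_g = O(1)$, which follows from Mirzakhani's (or Mirzakhani-Zograf's) asymptotic estimates for the large-genus behaviour of Weil-Petersson volumes. For the separating pieces, the ratio $V_{h,1}(0)V_{g-h,1}(0)/V_g$ decays rapidly enough in the smaller genus $h$ that the sum over $1\leq h \leq \lfloor g/2 \rfloor$ remains bounded, again using the known large-genus volume asymptotics. Combining these uniform bounds with the polynomial estimate above yields $\E_g^{\mathrm{WP}}[N_\epsilon(X)] = O(\epsilon^2) = O(g^{-2a})$, which gives the desired probability estimate.
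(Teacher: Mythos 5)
The paper does not prove this statement; it cites it directly from Mirzakhani's work, so there is no internal proof to compare against. Your reconstruction is the correct and standard argument, and it matches the approach Mirzakhani actually takes in the cited paper: reduce to an expected count of short closed geodesics via Markov's inequality, decompose by topological type, apply the integration formula, and control the resulting volume ratios with large-genus asymptotics for Weil-Petersson volumes.

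Two small points you glossed over are worth flagging. First, your claim that $\int_0^\epsilon L\,V_T(L)\,dL$ is bounded by a constant times $V_T(0)\,\epsilon^2$ does not follow merely from $V_T$ being an even polynomial with nonnegative coefficients (that alone gives $V_T(L)\ge V_T(0)$, the wrong inequality); you need the known estimate that $V_T(L)\leq V_T(0)\prod_i \frac{\sinh(L_i/2)}{L_i/2}$, or a similar bound such as $V_T(L)\leq e^{\sum L_i/2}V_T(0)$, which gives the uniform comparison for $L\leq 1$. Second, the uniform bound on $V_{g-1,2}(0,0)/V_g$ and on $\sum_{h} V_{h,1}(0)\,V_{g-h,1}(0)/V_g$ is not elementary: it relies on the Mirzakhani and Mirzakhani--Zograf large-genus volume asymptotics, and in particular on the fact that the separating sum is dominated by the $h=1$ term and decays roughly like $1/g$. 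You correctly identify this as the main difficulty and correctly say where to find it, so the proposal is sound; just be aware these are genuine inputs, not routine manipulations.
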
 

Thus with probability tending to 1 as $g\to\infty$, $C(X)\geq g^{-a}$ for any $a>0$. By combining these probabilistic results with Theorem \ref{thm: det thm 1}, we obtain the following for large genus surfaces.

\begin{thm}
\label{thm: random result}
Let $\varepsilon>0$ be given, and suppose that $X$ is a compact hyperbolic surface with genus $g$ chosen randomly according to the Weil-Petersson probability model. Suppose further that $\lambda$ is an eigenvalue of the Laplacian on $X$, and $\psi_\lambda$ is an $L^2$-normalised eigenfunction of the Laplacian with eigenvalue $\lambda$. Let $E\subseteq X$ be a measurable set for which
	\begin{align*}
		\|\psi_\lambda\1_E\|_2^2 =\varepsilon.
	\end{align*}
Then, if $\lambda\geq\frac{1}{4}$, there exists a universal constant $C>0$ such that for any $0<c<\frac{1}{4}$, and $a>0$, one obtains
	\begin{align*}
		\mathrm{Vol}(E) \geq C\varepsilon g^{c\varepsilon d(\lambda)-a},
	\end{align*}
with $d(\lambda)$ as in Theorem \ref{thm: det thm 1}. If $\lambda\in (0,\frac{1}{4}-\sigma)$, there exists a universal constant $C>0$ such that for any $0<c<\frac{1}{4}$ and $a>0$, one obtains
	\begin{align*}
		\mathrm{Vol}(E) \geq C\varepsilon g^{c(\frac{1}{4}+\frac{1}{2}\sqrt{\sigma})-a}.
	\end{align*}
Both bounds hold with the rate
	\begin{align*}
		1-O\left(\frac{\log(g)^2}{g^{1-4c}}+g^{-2a}\right),
	\end{align*}
as $g\to\infty$.
\end{thm}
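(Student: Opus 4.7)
The plan is to combine the deterministic lower bound of Theorem \ref{thm: det thm 1} with the probabilistic estimates on the geometric parameters from Theorem \ref{thm: GLMST19} and Theorem \ref{thm:mirzakhanilemma}. By Lemma \ref{lem:setbound}, for an $L$-tangle-free surface one may take $R(X) = L/4$ and $C(X) = 1/\min\{1, \InjRad(X)\}$ in condition \eqref{assumption}, so both geometric quantities appearing in the deterministic bound have direct probabilistic behaviour under the Weil-Petersson model.

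Fix $0 < c < 1/4$ and $a > 0$, and let $A_g \subseteq \mathcal{M}_g$ be the event that $X$ is $4c\log(g)$-tangle-free, and $B_g$ the event that $\InjRad(X) \geq g^{-a}$. Since $4c \in (0,1)$, Theorem \ref{thm: GLMST19} gives $\P_g^{\mathrm{WP}}(A_g) = 1 - O(\log(g)^2/g^{1-4c})$, while Theorem \ref{thm:mirzakhanilemma} gives $\P_g^{\mathrm{WP}}(B_g) = 1 - O(g^{-2a})$. A union bound then shows $A_g \cap B_g$ holds with probability $1 - O(\log(g)^2/g^{1-4c} + g^{-2a})$, matching the rate in the statement.

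On $A_g \cap B_g$, Lemma \ref{lem:setbound} permits us to take $R(X) = c\log(g)$ and, for $g$ large enough that $g^{-a} < 1$, $C(X) \leq g^{a}$. Substituting into Theorem \ref{thm: det thm 1} yields in the tempered case
\[
\Vol(E) \geq \frac{C\varepsilon}{g^{a}} \exp\bigl(d(\lambda)\,\varepsilon\, c\log(g)\bigr) = C\varepsilon\, g^{c\varepsilon d(\lambda) - a},
\]
and in the untempered case $\lambda \in (0, \tfrac{1}{4} - \sigma)$,
\[
\Vol(E) \geq \frac{C\varepsilon}{g^{a}} \exp\bigl((\tfrac{1}{4} + \tfrac{1}{2}\sqrt{\sigma})\, c\log(g)\bigr) = C\varepsilon\, g^{c(\frac{1}{4} + \frac{1}{2}\sqrt{\sigma}) - a},
\]
as required.

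There is no substantive obstacle beyond bookkeeping: the real work has already been done in establishing Theorem \ref{thm: det thm 1}, Lemma \ref{lem:setbound}, and the tangle-free estimate of Theorem \ref{thm: GLMST19}. The only subtle point is to ensure that $c\log(g)$ eventually exceeds the ``sufficiently large'' threshold for $R(X)$ demanded by Theorem \ref{thm: det thm 1}; this is automatic once $g$ is large in a manner depending only on $c$ and on $\lambda$ (or $\sigma$), and the resulting restriction is absorbed into the implied constants of the $O$-term.
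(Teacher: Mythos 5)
Your proposal is correct and matches the paper's own (very brief) argument: the paper likewise derives Theorem \ref{thm: random result} by setting $R(X) = c\log(g)$ (i.e., taking the tangle-free parameter $L = 4c\log(g)$ via Theorem \ref{thm: GLMST19}, hence the restriction $4c < 1$) and $C(X) = 1/\min\{1, \InjRad(X)\} \leq g^a$ via Theorem \ref{thm:mirzakhanilemma}, then substituting into Theorem \ref{thm: det thm 1} with a union bound on the failure probabilities. Your handling of the ``$R(X)$ sufficiently large'' threshold is also the right reading of the paper's asymptotic statement.
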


\noindent\textbf{Remark.} As noted, the exponent of the genus in the above result is governed exclusively by probabilistic estimates for $R(X)$. For our result, these followed from probabilistic estimates of the tangle-free parameter $L$. To improve the exponent using this method, one would need to show that typical surfaces can have a tangle-free parameter of the size $A\log(g)$ for $A$ large. However, in Monk and Thomas \cite{Mo.Th2020}, it is shown that no surface is more than $(4\log(g)+O(1))$-tangle-free, which would not be sufficient for this. Thus, any significant improvement to the exponent would require a new approach to estimating $R(X)$ for a typical surface.

\section{Harmonic Analysis for Hyperbolic Surfaces}
\label{sec: background}
We begin by defining our main object of study, hyperbolic surfaces, as well as outlining necessary tools from harmonic analysis that are used to obtain our results. One can find further details on these topics in Katok \cite{Ka1992}, Bergeron \cite{Be2016} and Iwaniec \cite{Iw2002}. \par 
The \textit{hyperbolic upper half-plane} will be a sufficient model of hyperbolic space for our purposes. This is defined by
	\begin{align*}
		\H = \{ z=x+iy \in \C\colon y>0\},
	\end{align*}
and is equipped with the Riemannian metric 
	\begin{align*}
		\df s^2 = \frac{\df x^2+\df y^2}{y^2},
	\end{align*}
which induces the standard Riemannian volume form
	\begin{align*}
		\df\mu = \frac{\df x\wedge \df y}{y^2}.
	\end{align*}
The set of orientation preserving isometries of $\H$ with this metric are the M\"{o}bius transformations given by
	\begin{align*}
		z \mapsto \frac{az+b}{cz+d},
	\end{align*}
for some $a,b,c,d\in\R$ with $ad-bc = 1$. They can be identified with the group $\mathrm{PSL}(2,\R)$ with the natural associated group action. Using this, one can make the identification $\H = \PSL(2,\R)/\mathrm{SO}(2)$. \par 
A convenient definition for a hyperbolic surface is then obtained through this group action. Indeed, consider a discrete subgroup $\Gamma <\mathrm{PSL}(2,\R)$ that acts freely upon $\H$. A \textit{hyperbolic surface} is a manifold quotient $X=\Gamma\backslash\H$. That is, the surface consists of points on $\H$ identified up to orbits of isometries in $\Gamma$. The Riemannian metric and volume measure are induced upon the surface in a natural manner. To each such subgroup $\Gamma$ (and hence to each surface), one may determine (non-uniquely) a fundamental domain in $\H$. Functions defined on the surface can be identified with $\Gamma$-invariant functions upon $\H$, or functions on such a fundamental domain. We will deal in this article exclusively with the case when $X$ is compact.\par 
The harmonic analysis tools that are required to show our result are given by invariant integral operators and the Selberg transform. Such operators are constructed from radial functions. These are bounded, even and measurable functions $k: (-\infty,\infty)\to \C$. They give rise to a function, which we also denote by the same symbol $k:\H\times\H\to\C$, through the correspondence
	\begin{align*}
		k(z,w) = k(d(z,w)),
	\end{align*}
where $d(z,w)$ is the hyperbolic distance between $z,w\in\H$. This function has the important property that it is invariant under the diagonal action of $\mathrm{PSL}(2,\R)$. That is, for any $\gamma\in\mathrm{PSL}(2,\R)$ and $z,w\in\H$ one has 
	\begin{align*}
		k(\gamma z,\gamma w) = k(z,w).
	\end{align*}
From this, one then \textit{formally} defines a function $k_\Gamma:X\times X\to\C$ called an \textit{automorphic kernel} by
	\begin{align*}
		k_\Gamma(z,w) = \sum_{\gamma\in\Gamma} k(z,\gamma w),
	\end{align*}
where we have defined $k_\Gamma$ as a $\Gamma$-periodic function on $\H$. For this sum to converge, one requires an appropriate decay condition on $k$; for instance
	\begin{align*}
		|k(\rho)|= O(e^{-\rho(1+\delta)}),
	\end{align*}
for some $\delta>0$ would suffice, and we assume such a condition from now on. We can then define an invariant integral operator $T_k$ on functions on $X$ through the formula
	\begin{align*}
		(T_kf)(z) = \int_D k_\Gamma(z,w)f(w)\df\mu(w),
	\end{align*}
where $D$ is a fundamental domain for $X$. The importance of operators defined in this manner is their connection to the Laplacian operator which we recall is defined in coordinates on $\H$ as
	\begin{align*}
		\Delta = -\div\grad = -y^2\left(\frac{\partial^2}{\partial x^2} + \frac{\partial^2}{\partial y^2}\right).
	\end{align*}
This operator commutes with isometries on $\H$ and so naturally passes to an operator on the hyperbolic surface $X$. Since $X$ is compact, the Laplacian has a discrete spectrum contained in $[0,\infty)$, with the $0$-eigenspace being simple and consisting of the constant functions. In addition, there exists an orthonormal basis $\{\psi_{\lambda_j}\}_{j=0}^\infty$ of Laplacian eigenfunctions for $L^2(X)$ with eigenvalues $0=\lambda_0< \lambda_1 \leq \lambda_2 \leq\ldots\to\infty$. \par 
An important observation is that any eigenfunction of the Laplacian is also an eigenfunction of an invariant integral operator $T_k$ on the surface. The eigenvalue of such an eigenfunction for $T_k$ can be determined by taking a \textit{Selberg transform} of the initial radial kernel. This is defined to be the Fourier transform 
	\begin{align*}
		\mathcal{S} (k)(r) = h(r) = \int_{-\infty}^{+\infty} e^{iru} g(u) \df u,
	\end{align*}
of the function
	\begin{align*}
		g(u) = \sqrt{2} \int_{|u|}^{+\infty}   \frac{k(\rho) \sinh \rho}{\sqrt{\cosh \rho - \cosh u}} 		\df\rho.
	\end{align*}
The spectrum is then provided from the following result.
\begin{thm}[{\cite[Theorem 3.8]{Be2016}}]

Let $X=\Gamma\backslash\H$ be a hyperbolic surface and $k \colon [0,\infty)\to\C$ a radial kernel. Suppose that $\psi_\lambda$ is an eigenfunction of the Laplacian with eigenvalue $\lambda=s_\lambda^2+\frac{1}{4}$ for $s_\lambda\in\C$.  Then $\psi_\lambda$ is an eigenfunction of the convolution operator $T_k$ with invariant kernel $k$ and
	\begin{align*}
		(T_k\psi_\lambda)(z) = \int_X k_\Gamma(d(z,w)) \psi_\lambda(w) \: \df\mu(w)=h(s_\lambda)\psi_\lambda(z),
	\end{align*}
where $h(s_\lambda)=\mathcal{S}(k)(s_\lambda)$.
\end{thm}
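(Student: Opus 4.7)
The strategy is classical: unfold the automorphic sum using $\Gamma$-invariance of $\psi_\lambda$, pass to geodesic polar coordinates centred at the evaluation point, recognise the angular average as a spherical mean, and identify the remaining one-dimensional integral as the Selberg transform of $k$. First I would lift $\psi_\lambda$ to a $\Gamma$-invariant function on $\H$ and use the definition of $k_\Gamma$ together with the decay hypothesis on $k$ (which guarantees absolute convergence) to unfold
\[
(T_k\psi_\lambda)(z) = \int_D \sum_{\gamma\in\Gamma} k(d(z,\gamma w)) \psi_\lambda(w)\,\df\mu(w) = \int_\H k(d(z,w))\,\psi_\lambda(w)\,\df\mu(w).
\]
Fixing $z$ and parameterising $w$ by geodesic polar coordinates $(\rho,\theta)$ with $\rho = d(z,w)$ and volume element $\df\mu = \sinh\rho\,\df\rho\,\df\theta$, this becomes
\[
(T_k\psi_\lambda)(z) = \int_0^\infty k(\rho)\sinh\rho \left(\int_0^{2\pi} \psi_\lambda(w(\rho,\theta))\,\df\theta\right) \df\rho.
\]

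Next I would invoke the mean-value property for Laplace eigenfunctions on $\H$. The spherical mean operator $M_\rho f(z) = \tfrac{1}{2\pi}\int_0^{2\pi} f(w(\rho,\theta))\,\df\theta$ commutes with isometries fixing $z$ and with $\Delta$; consequently $\rho \mapsto M_\rho\psi_\lambda(z)$ is a smooth radial $\lambda$-eigenfunction of $\Delta$ taking the value $\psi_\lambda(z)$ at $\rho = 0$. By uniqueness of solutions to the radial Laplace ODE with these initial data,
\[
M_\rho\psi_\lambda(z) = \phi_{s_\lambda}(\rho)\,\psi_\lambda(z),
\]
where $\phi_{s_\lambda}$ is the spherical function, i.e.\ the unique smooth radial solution of $\Delta\phi = \lambda\phi$ on $\H$ with $\phi(0)=1$. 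Substituting back yields
\[
(T_k\psi_\lambda)(z) = \psi_\lambda(z) \cdot 2\pi \int_0^\infty k(\rho)\sinh\rho\,\phi_{s_\lambda}(\rho)\,\df\rho.
\]

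It remains to identify the scalar $2\pi\int_0^\infty k(\rho)\sinh\rho\,\phi_{s_\lambda}(\rho)\,\df\rho$ with $h(s_\lambda)$. Here I would use the Mehler integral representation
\[
\phi_s(\rho) = \frac{\sqrt{2}}{\pi}\int_0^\rho \frac{\cos(su)}{\sqrt{\cosh\rho - \cosh u}}\,\df u,
\]
substitute it in, and swap the order of integration (justified by the decay of $k$); the inner $\rho$-integral is exactly $g(u)/\sqrt{2}$, so the scalar collapses to $2\int_0^\infty \cos(s_\lambda u) g(u)\,\df u$, which equals $\int_{-\infty}^\infty e^{is_\lambda u} g(u)\,\df u = h(s_\lambda)$ since $g$ is even. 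The main obstacle is the Mehler representation itself: it is not obvious a priori, but can be established either by checking that both sides satisfy the same hypergeometric ODE in $\rho$ with matching values and derivatives at $\rho = 0$, or more structurally by recognising the Selberg transform as the composition $\mathcal{S} = \mathcal{F}\circ\mathcal{A}$ of an Abel (Weyl fractional) transform with a Euclidean Fourier transform, whose adjointness on radial functions produces precisely this formula.
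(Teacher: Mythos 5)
Your proof is correct and follows the standard route taken in the cited reference (Bergeron, Theorem 3.8, and more generally in the Selberg/Harish-Chandra theory of point-pair invariants): unfold the automorphic sum, pass to polar coordinates, identify the angular average of $\psi_\lambda$ with $\phi_{s_\lambda}(\rho)\psi_\lambda(z)$ via uniqueness for the radial Laplace ODE, and then use the Mehler--Dirichlet representation of the spherical function to recognise the remaining scalar as the composition of the Abel transform $k\mapsto g$ with the Fourier transform $g\mapsto h$. The paper itself does not reprove this theorem, so there is nothing to compare beyond noting that your argument is the standard one; the bookkeeping of constants ($2\pi$ from the angular integral, $\sqrt{2}/\pi$ from the Mehler formula, $\sqrt{2}$ in the definition of $g$, and the evenness of $g$) all checks out and yields exactly $h(s_\lambda)$.
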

One refers to $s_\lambda$ in the above result as the \textit{spectral parameter} associated to $\lambda$. Through this result, and the Selberg transform, one can also reconstruct an invariant kernel operator with a specified spectrum. Indeed, given a suitable function $h$ one can take an inverse Selberg transform to obtain a radial kernel $k$ through the formulae
	\begin{align*}
		 g(u) = \frac1{2\pi}  \int_{-\infty}^{+\infty} e^{-isu} h(s) \df s,
	\end{align*}
and then
	\begin{align*}
		k(\rho) = -\frac1{\sqrt{2}\pi} \int_\rho^{+\infty} \frac{g'(u)}{\sqrt{\cosh u - \cosh \rho}} \df u.
	\end{align*}

\section{Delocalisation of Tempered Eigenfunctions on Large Genus Surfaces}
We start with the deterministic version of our result, and thus consider a fixed compact hyperbolic surface $X=\Gamma\backslash\H$. Let $D\subseteq \H$ be a fundamental domain of $X$, and $E\subseteq X$ a measurable set. Recall that $X$ satisfies the geometric assumption \eqref{assumption} with some constants $R(X)\geq\mathrm{InjRad}(X)>0$, $C(X)>0$ and $C_0(\delta)>0$. Suppose that $\{\psi_{\lambda_j}\}_{j=0}^\infty$ is an orthonormal basis for $L^2(X)$ of Laplacian eigenfunctions with corresponding eigenvalues $0=\lambda_0< \lambda_1 \leq \ldots \to\infty$. It is clear that in the case of the constant eigenfunctions corresponding to $\lambda_0$ that the delocalisation result holds, and so we will fix an eigenvalue $\lambda = \lambda_j$ for some $j\geq 1$. In particular, in this section we will further assume that the eigenfunction is tempered so that $\lambda \geq \frac{1}{4}$. Let $s_\lambda \in [0,\infty)$ be the spectral parameter associated with $\lambda$ through the equation $s_\lambda^2+\frac{1}{4}=\lambda$.

\subsection{Outline of the proof}
\label{subsec: proof outline}
The connection between the eigenfunction, the geodesic loop parameter $R(X)$ and the volume of the set $E$ is unified in the construction of a propagation operator. To exhibit this we utilise the following methodology:
	\begin{enumerate}
		\item We consider a family of operators that are to be seen as a smoothed cosine wave kernel, and recall how these operators act upon Laplacian eigenfunctions using results of \cite{Gi.Le.Sa.Th2021}. [Lemma \ref{lem: Pt norm}] 
		\item By selecting certain members of this family of operators and weighting them appropriately, we construct another family of operators now specialised to a certain fixed eigenvalue $\lambda$, as well as some secondary parameters that will later depend on the parameter $\varepsilon$. We then determine the operator norm of these operators. [Lemma \ref{lem: operator norm}]
		\item The eigenfunctions of the Laplacian are also eigenfunctions of the constructed family of operators, and so we study their eigenvalues under this operator family. This is done by showing that they can be written in terms of Fej\'{e}r kernels of certain orders, and so we obtain bounds on the eigenvalues using properties of these kernels. [Lemma \ref{lem: spectral action}] 
		\item Through studying the spectral decomposition of the restricted eigenfunction $\psi_\lambda\1_E$ over an orthonormal basis, we can relate a lower bound on the volume of $E$ to the previously obtained bounds on the eigenvalues and operator norms of the constructed family of operators. [Theorem \ref{thm: deterministic result}]
	\end{enumerate}

To begin, let us explain how one may take $R(X)=\frac{L}{4}$ and $C(X)= \frac{1}{\min\{1,\mathrm{InjRad}(X)\}}$ for an $L$-tangle-free surface $X$. This will allow one to contextualise the results in terms of $L$, and allow for Theorem \ref{thm: random result} to be deduced immediately from Theorems \ref{thm: det thm 1}, \ref{thm: GLMST19} and \ref{thm:mirzakhanilemma} .

\begin{lemma}
\label{lem:setbound}
Suppose that $X$ is an $L$-tangle-free compact hyperbolic surface. Then, for any $\delta>0$, there exists a constant $C_0(\delta)>0$ such that for all $z,w\in \H$, one has
	\begin{align*}
		|\{\gamma\in\Gamma:d(z,\gamma w)\leq r\}| \leq \frac{C_0(\delta)}{\min\{1,\mathrm{InjRad}(X)\}}e^{\delta r},\quad \text{for all}\ r\leq \frac{L}{4}.
	\end{align*}
\end{lemma}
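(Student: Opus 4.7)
My plan is to reduce the counting problem to a loop-counting problem at a single base point, where the tangle-free hypothesis forces all sufficiently short loops to be powers of one element.

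First, fix $z, w \in \H$ and choose $\gamma_* \in \Gamma$ realising $d_0 := d(z, \gamma_* w) = \min_{\gamma \in \Gamma} d(z, \gamma w)$ (the estimate is vacuous if $d_0 > r$). For any $\gamma \in \Gamma$ with $d(z, \gamma w) \leq r$, set $\eta := \gamma\gamma_*^{-1}$; the triangle inequality yields
\begin{align*}
d(\gamma_* w, \eta \gamma_* w) = d(\gamma_* w, \gamma w) \leq d(\gamma_* w, z) + d(z, \gamma w) \leq d_0 + r \leq 2r.
\end{align*}
The map $\gamma \mapsto \eta$ is injective, and $\{\eta : d(\gamma_* w, \eta \gamma_* w) \leq 2r\}$ is in bijection with $T(w, 2r) := \{\gamma \in \Gamma : d(w, \gamma w) \leq 2r\}$ via $\Gamma$-conjugation. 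So it suffices to bound $|T(w, 2r)|$ for $2r \leq L/2$.

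Second, each nontrivial $\gamma \in T(w, 2r)$ projects to a geodesic loop at $[w] \in X$ of length $d(w, \gamma w) \leq 2r \leq L/2$. Let $\delta_w$ denote the shortest such loop and $\gamma_0 \in \Gamma$ its associated element. By Theorem \ref{thm:monkthomas}, every loop $\beta$ at $[w]$ with $\ell(\beta) + \ell(\delta_w) < L$ is homotopic with fixed endpoints to a power of $\delta_w$, hence the corresponding $\Gamma$-element is a power of $\gamma_0$. This strict inequality is automatic whenever $\ell(\delta_w) < L/2$, and if instead $\ell(\delta_w) \geq L/2$ then every nontrivial loop at $[w]$ has length at least $L/2$, so $T(w, 2r)$ is trivial for $2r < L/2$.

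Third, the translation length $\ell(\gamma_0)$ equals the length of the corresponding closed geodesic on $X$, hence $\ell(\gamma_0) \geq 2\InjRad(X)$. Then $d(w, \gamma_0^n w) \geq |n|\ell(\gamma_0) \geq 2|n|\InjRad(X)$, forcing $|n| \leq r/\InjRad(X)$, so
\begin{align*}
|T(w, 2r)| \leq \frac{2r}{\InjRad(X)} + 1 \leq \frac{2r + 1}{\min\{1, \InjRad(X)\}}.
\end{align*}
Setting $C_0(\delta) := \sup_{r \geq 0}(2r+1)e^{-\delta r} < \infty$ absorbs the linear factor into the exponential and completes the proof. The main technical point is the boundary configuration $\ell(\delta_w) = 2r = L/2$, where Theorem \ref{thm:monkthomas} applies only with strict inequality; this is handled by using that an $L$-tangle-free surface is also $L'$-tangle-free for any $L' < L$, letting one work in the strict regime and absorb the small loss into the constants.
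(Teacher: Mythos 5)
Your proof is correct and follows essentially the same strategy as the paper's: reduce the two-point count $|\{\gamma : d(z,\gamma w)\le r\}|$ to a one-point loop count via the triangle inequality and a distinguished $\gamma_* \in \Gamma$, apply Theorem \ref{thm:monkthomas} to force every short-loop element to be a power of a single primitive $\gamma_0$, bound the number of such powers through the translation length versus $2\InjRad(X)$, and absorb the resulting linear factor into $e^{\delta r}$. The differences are cosmetic---you select the minimiser $\gamma_*$ and translate directly rather than arguing by contradiction with an arbitrary element of the set, and your absorbing constant $\sup_{r\ge 0}(2r+1)e^{-\delta r}$ is a little tidier than the paper's explicit $3e^{1/\delta}$. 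One caution on your closing remark: while it is true that an $L$-tangle-free surface is $L'$-tangle-free for all $L'<L$, this is the \emph{weaker} statement (it shrinks, rather than enlarges, the range of admissible $r$), so as stated it does not close the exact-equality configuration $\ell(\delta_w)=2r=L/2$; the paper's own proof silently skips this same boundary point, so you have not introduced a gap relative to the paper, but the proposed fix would not actually resolve it.
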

\begin{proof}
It is clear from Theorem \ref{thm:monkthomas} that for any $r\leq \frac{L}{2}$, and any $z\in \mathbb{H}$, there is at most one non-identity primitive $\gamma\in\Gamma$ that is in the set
	\begin{align*}
		\{\gamma\in\Gamma: d(z,\gamma z)\leq r\}.
	\end{align*}
Any element in $\Gamma$ is the power of a primitive element as $X$ is compact. Moreover, if $\gamma_1\in \Gamma$ is equal to $\gamma_0^n$ for some primitive element $\gamma_0\in\Gamma$, then 
	\begin{align*}
		d(z,\gamma_0 z) \leq d(z, \gamma_1 z).
	\end{align*}
This means that if $\gamma_1$ is in this set, then the primitive element $\gamma_0$ is also. Combining these observations, the only elements in this set are $\gamma^n$ for some powers $n\in\Z$, and $\gamma\in\Gamma$ a single primitive element.\par 

To determine an upper bound on the number of elements, we use the fact that the distance $d(z,\gamma^n z)$ is at least $n$ times the translation distance of $\gamma$ (it would be precisely equal if $z$ were on the axis of $\gamma$). By definition, the translation distance is bounded below by twice the injectivity radius of the surface. Considering the identity and both the positive and negative powers of $\gamma$, we see the maximal number of elements in this set is
	\begin{align*}
		1+\left\lfloor \frac{r}{\mathrm{InjRad}(X)} \right\rfloor,
	\end{align*}
for any $z\in\H$. A bound on this set provides a bound on the cardinality of the set 
	\begin{align}
		\label{eq: set1}
		\left\{\gamma\in\Gamma:d(z,\gamma w)\leq \frac{r}{2}\right\},
	\end{align}
for $z,w\in\H$. Indeed, suppose there were at least 
	\begin{align*}
		 m=2+\left\lfloor \frac{r}{\mathrm{InjRad}(X)} \right\rfloor
	\end{align*}
non-identity elements in the set, labelled $\gamma_i$ for $1\leq i\leq m$. For each $2\leq i\leq m$ we have
	\begin{align*}
		d(\gamma_1w,(\gamma_i\gamma_1^{-1})(\gamma_1w))\leq d(\gamma_1w,z)+d(z,\gamma_iw)\leq r,
	\end{align*}
by the triangle inequality, and the fact that the $\gamma_i$ are in the set. As the $\gamma_i$ are distinct, $\gamma_i\gamma_1^{-1}$ is not the identity for any $i=2,\ldots,m$. This means that there are at least $m-1$ non-identity elements in the set
	\begin{align*}
		\{\gamma\in\Gamma: d(\gamma_1w,\gamma\gamma_1 w)\leq r\},
	\end{align*}
contradicting the previous bound. When including the identity, this means there are at most $m$ elements in the set stated in equation \eqref{eq: set1}. Notice that 
	\begin{align*}
		m \leq 2 + \left\lfloor \frac{r}{\min\{1,\mathrm{InjRad}(X)\}} \right\rfloor.
	\end{align*}
We wish to find a constant $C_0(\delta)>0$ so that 
	\begin{align*}
		m \leq \frac{C_0(\delta)}{\min\{1,\mathrm{InjRad}(X)\}}e^{\delta r},
	\end{align*}
for all $\delta>0$. We first bound $2+r$. If $r< 1$, then trivially, this is bounded by $3\leq 3e^{\delta r}$. If $r\geq 1$ then we can observe that 
	\begin{align*}
		2+r \leq 3r \leq 3e^{\frac{1}{\delta}}e^{\delta r}.
	\end{align*}
Indeed, 
	\begin{align*}
		3e^{\frac{1}{\delta}}e^{\delta r} \geq 3\left(1+ \frac{1}{\delta}\right)(1+\delta r) \geq 3r.
	\end{align*}
Hence given $\delta>0$, we set $C_0(\delta) = 3e^{\frac{1}{\delta}}$. Then,
	\begin{align*}
		m &\leq 2 + \left\lfloor \frac{r}{\min\{1,\mathrm{InjRad}(X)\}} \right\rfloor \\
		&\leq \frac{2+r}{\min\{1,\mathrm{InjRad}(X)\}} \\
		&\leq \frac{C_0(\delta)}{\min\{1,\mathrm{InjRad}(X)\}}e^{\delta r}.
	\end{align*}
\end{proof}

\subsection{Construction of a Family of Propagation Operators}
In this subsection we take the first step of defining an appropriate family of operators. These are largely based on their similarity to wave propagation operators, and are defined through the inverse Selberg transform. Indeed, define 
	\begin{align*}
		h_t(r) = \frac{\cos(rt)}{\cosh(\frac{\pi r}{2})},
	\end{align*}
for appropriate values of $r\in\C$ and $t\geq 0$. Denote by $k_t(\rho)$ the radial kernel obtained via the inverse Selberg transform of $h_t$. This defines an integral operator $P_t$ on functions of the hyperbolic plane via
	\begin{align*}
		P_tf(z)= \int_\H k_t(d(z,w))f(w)\df\mu(w).
	\end{align*}
The construction of this operator is similar to that used in Iwaniec and Sarnak \cite{Iw.Sa1995} when computing sup norm bounds for Laplacian eigenfunctions on arithmetic surfaces. Indeed, in their article they construct a propagation operator whose kernel is based on the Fourier transform of $h_t$. The exact kernel $k_t$ defined above has been studied by Brooks and Lindenstrauss in \cite{Br.Li2014}, and also by Gilmore, Le Masson, Sahlsten and Thomas in \cite{Gi.Le.Sa.Th2021}, and several important facts about the associated operator $P_t$ will be utilised here.\par 
Through use of the automorphic kernel one may consider $P_t$ as an operator on functions of the surface $X$. That is, we consider $P_t$ on such functions acting by
	\begin{align*}
		P_tf(z) = \int_D \sum_{\gamma\in \Gamma} k_t(d(z,\gamma w))f(w)\df\mu(w),
	\end{align*}
with $D$ a fundamental domain of $X$ as before. Let $\Pi$ denote the projection operator to the subspace orthogonal to constants defined by
	\begin{align*}
		\Pi f(z) = f(z) - \frac{1}{\sqrt{\mathrm{Vol}(X)}}\int_D f(w)\df\mu(w).
	\end{align*}
Then \cite{Gi.Le.Sa.Th2021} shows that the operators $P_t\Pi$ are bounded linear operators from $L^q(X)\to L^p(X)$ for $1\leq q\leq 2\leq p\leq\infty$ conjugate indices, when $t$ is not too large. In fact, an explicit upper bound is obtained on the operator norm. Here it will suffice to consider only the $L^1(X)\to L^\infty(X)$ norm estimates, and we replicate the statement of these bounds for the reader's convenience.

\begin{lemma}[{\cite[Lemma 3.3, Theorem 4.3]{Gi.Le.Sa.Th2021}}]
\label{lem: Pt norm}
Suppose that $X$ is a compact hyperbolic surface with $R(X)$ the parameter of condition \eqref{assumption}, and with associated constants $C(X)$ and $C_0(\delta)$. Then for $t\leq \frac{R(X)}{4}$ and any $\delta>0$,
	\begin{align*}
		\|P_t\Pi\|_{L^1(X)\to L^\infty(X)} \leq C(X)C_0(\delta)e^{-(\frac{1}{2}-\delta)t},
	\end{align*}
\end{lemma}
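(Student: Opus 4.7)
The $L^1(X)\to L^\infty(X)$ operator norm of an integral operator coincides with the essential supremum of its integral kernel, so the plan is to identify the kernel of $P_t\Pi$ on $X\times X$ and bound it uniformly. The two main ingredients are a precise pointwise estimate on the radial kernel $k_t$, obtained from the inverse Selberg transform of $h_t$, and a summation over $\Gamma$ controlled by the counting hypothesis \eqref{assumption}.

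The first step is a direct computation of the auxiliary function $g_t$: using $\cos(st)=\tfrac12(e^{ist}+e^{-ist})$ together with the Fourier identity $\int e^{-isv}\,\mathrm{sech}(\pi s/2)\,ds=2\,\mathrm{sech}(v)$, one finds
	\begin{align*}
		g_t(u) \;=\; \frac{1}{2\pi}\bigl(\mathrm{sech}(u-t)+\mathrm{sech}(u+t)\bigr),
	\end{align*}
so $g_t$ is a pair of hyperbolic-secant bumps concentrated near $u=\pm t$. Substituting into the Abel-type formula for $k_t$ and estimating the resulting integral against the singularity $(\cosh u-\cosh\rho)^{-1/2}$, while using that $h_t$ is holomorphic in the strip $|\mathrm{Im}(r)|<1$ (the first poles of $1/\cosh(\pi r/2)$ lying at $r=\pm i$), produces a pointwise bound of the form $|k_t(\rho)|\lesssim_\delta e^{-(\frac12-\delta)\rho}$, with $k_t$ essentially concentrated near the light cone $\rho\approx t$ and exponentially smaller beyond it. This is the content of Lemma 3.3 of \cite{Gi.Le.Sa.Th2021}, whose derivation I would follow.

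For the second step, since the constant function on $X$ is the Laplace eigenfunction for $\lambda_0=0$ (with spectral parameter $s_0=i/2$), the Selberg transform identity recalled at the end of Section~\ref{sec: background} shows that constants are eigenfunctions of $P_t$ with eigenvalue $h_t(i/2)=\sqrt{2}\cosh(t/2)$. Hence the kernel of $P_t\Pi$ reads
	\begin{align*}
		K(z,w) \;=\; \sum_{\gamma\in\Gamma} k_t\bigl(d(z,\gamma w)\bigr) - \frac{\sqrt{2}\cosh(t/2)}{\mathrm{Vol}(X)}.
	\end{align*}
Splitting the automorphic sum according to the threshold $d(z,\gamma w)\leq 2t$ versus $d(z,\gamma w)>2t$, the restriction $t\leq R(X)/4$ places the first range comfortably inside the regime where \eqref{assumption} applies. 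Abel summation against the counting function $N(r):=|\{\gamma\in\Gamma:d(z,\gamma w)\leq r\}|$, bounded by $C(X)C_0(\delta)e^{\delta r}$, converts the first sum into a Stieltjes integral dominated by $C(X)C_0(\delta)e^{-(\frac12-\delta)t}$ once the subtracted constant term cancels the leading light-cone contribution; the tail $d(z,\gamma w)>2t$ is negligible by the super-exponential decay of $k_t$ past the light cone.

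The main obstacle is the interplay between the pointwise bound on $k_t$ and the subtracted constant: the automorphic sum and the average $\sqrt{2}\cosh(t/2)/\mathrm{Vol}(X)$ are individually of order $e^{t/2}$, and only the difference exhibits the decay $e^{-(\frac12-\delta)t}$ claimed in the lemma. Exhibiting this cancellation quantitatively, while keeping the sub-exponential slack $C_0(\delta)e^{\delta r}$ tolerated by \eqref{assumption} harmless, is where the geometric hypothesis on $X$ genuinely enters; a purely spectral argument instead would require pointwise estimates on $\psi_{\lambda_j}(z)\overline{\psi_{\lambda_j}(w)}$, precisely the kind of eigenfunction concentration bound the present paper aims to derive.
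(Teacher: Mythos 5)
Your overall skeleton — compute $g_t$ by Fourier inversion, obtain a pointwise bound on $k_t$, split the automorphic sum according to a distance threshold, and use \eqref{assumption} to control the short range — matches the paper's sketch (which defers the details to Gilmore--Le Masson--Sahlsten--Thomas). Your computation of $g_t(u)=\tfrac{1}{2\pi}\bigl(\mathrm{sech}(u-t)+\mathrm{sech}(u+t)\bigr)$ is correct, as is the identification of the kernel of $P_t\Pi$ as $\sum_{\gamma}k_t(d(z,\gamma w))-h_t(i/2)/\mathrm{Vol}(X)$. However there are two genuine problems.

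First, the pointwise kernel bound you state, $|k_t(\rho)|\lesssim_\delta e^{-(\frac12-\delta)\rho}$, is not the estimate that drives the argument, and by itself it does not give the claimed $t$-decay. Feeding $e^{-(\frac12-\delta)\rho}$ into an Abel summation against $N(r)=|\{\gamma: d(z,\gamma w)\leq r\}|\leq C(X)C_0(\delta)e^{\delta r}$ gives
\begin{align*}
\sum_{d(z,\gamma w)\leq 4t}|k_t(d(z,\gamma w))|\;\lesssim\; C(X)C_0(\delta)\int_0^{4t}e^{-(\frac12-2\delta)r}\,\df r \;=\; O\bigl(C(X)C_0(\delta)\bigr),
\end{align*}
which is bounded but does not decay in $t$. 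The actual Brooks--Lindenstrauss/GLST estimate is of a different shape: the kernel is concentrated near the light cone $\rho\approx t$ where its size is $\sim e^{-t/2}$ (because the wave mass is spread over a sphere of hyperbolic volume $\sim e^t$), and it is small for $\rho$ well away from $t$ on both sides. The uniform bound $\sup_{\rho\leq 4t}|k_t(\rho)|\lesssim e^{-t/2}$ combined with $N(4t)\lesssim C(X)C_0(\delta)e^{4\delta t}$ is what yields the desired $C(X)C_0(\delta)\,e^{-(\frac12-4\delta)t}$ bound after relabelling $\delta$. You allude to the light-cone concentration qualitatively, but your stated quantitative bound omits the $t$-dependence that carries the argument, and your Abel-summation step consequently does not close.

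Second, the cancellation mechanism you invoke is not how the proof works, and it appears to rest on a false premise. You assert that the automorphic sum $\sum_\gamma k_t(d(z,\gamma w))$ is itself of order $e^{t/2}$ and that only the subtraction of $\sqrt2\cosh(t/2)/\mathrm{Vol}(X)$ produces the decay. But under hypothesis \eqref{assumption} with $t\leq R(X)/4$ the automorphic sum is \emph{not} of order $e^{t/2}$: the counting condition caps the number of contributing $\gamma$ at $C(X)C_0(\delta)e^{4\delta t}$, so (with the correct kernel bound) the sum is already $O\bigl(C(X)C_0(\delta)e^{-(\frac12-4\delta)t}\bigr)$ by the triangle inequality alone. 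The paper's own account — ``estimating the expression $\sum_\gamma|k_t(d(z,\gamma w))|$ \ldots off-set by a slight exponential decay of the kernel'' — confirms that the sum of absolute values is bounded directly, with no cancellation; the constant term is then controlled separately. Finally, a small point: the paper's split is at radius $4t$, not $2t$, which matters because $4t\leq R(X)$ is exactly what makes every term in the short range fall inside the regime of \eqref{assumption}, and because the super-exponential decay of $k_t$ past the light cone is only available from that scale onward.
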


The proof of this result relies on estimating the expression
	\begin{align*}
		\sum_{\gamma\in\Gamma} |k_t(d(z,\gamma w))|,
	\end{align*}
for $z,w\in D$, which arises from the automorphic kernel of $P_t\Pi$. Outside of a ball of radius $4t$, Brooks and Lindenstrauss show that the kernel $k_t$ satisfies some strong exponential decay. Inside the ball of radius $4t$, we are considering points $z,w\in D$ for which $d(z,\gamma w)\leq R(X)$. The number of these terms is bounded by a sub-exponential growth from condition \eqref{assumption} which is off-set by a slight exponential decay of the kernel. Crucially, this is where control over the geodesics between all points on the surface with lengths up to the scale $R$ is utilised, and results in an exponential decay for the operator norm of $P_t\Pi$.\par 

Armed with this upper bound for the operator norm of $P_t\Pi$, we wish to construct a new operator that is specialised to the eigenvalue $\lambda$, and two auxiliary parameters that will later depend upon $\varepsilon$. This is done by taking a certain linear combination of members of the $P_t\Pi$ family for select values of $t$ at which the above bounds hold. The choice of $t$ is delicate. On one hand, we need to take enough operators in the linear combination so that the operator has an appropriate spectral action on Laplacian eigenfunctions. On the other hand, taking too many operators in the linear combination will inflate the operator norm too much. To this end, we follow the approach of \cite{Ga.Sr2019} used for regular graphs which refines and improves upon the original techniques and bounds obtained in \cite{Br.Li2013, Br.Li2014}. \par 

Recall that the Fej\'{e}r kernel of order $N$ is defined by 
	\begin{align*}
		F_N(s) = \frac{1}{N}\frac{\sin^2(Ns/2)}{\sin^2{s/2}} = 1+\sum_{j=1}^N \frac{N-j}{N/2} \cos(js).
	\end{align*}
By dividing the summation in the right hand side by certain hyperbolic cosines, one recovers a summation of functions similar to the $h_t$ defined above. We will exploit this observation to understand the spectral action of a certain linear combination of $P_t\Pi$ as a function of the Fej\'{e}r kernel. To this end, for positive integers $N$ and $r$, define 
	\begin{align*}
		W_{\lambda,r,N} = \sum_{j=1}^N \frac{N-j}{N}(\cos(r s_\lambda j)+1)P_{jr}\Pi.
	\end{align*}
With control over the values of $N$ and $r$, we can utilise the upper bound on the operator norm of $P_t\Pi$ to see that this is a bounded operator from $L^1(X)\to L^\infty(X)$, and obtain explicit bounds on the operator norm.

\begin{lemma}
\label{lem: operator norm}
Suppose that $\lambda\geq\frac{1}{4}$ is an eigenvalue of the Laplacian on a compact hyperbolic surface $X$ with parameter $R=R(X)$ from condition \eqref{assumption}, and associated constants $C(X)$ and $C_0(\delta)$. Given positive integers $N$ and $r$ satisfying $Nr\leq \frac{1}{4}R$, there exists a universal constant $\delta_0>0$ such that for all $\delta<\delta_0$, the operator $W_{\lambda,r,N}:L^1(X)\to L^\infty(X)$ is a bounded linear operator with norm
	\begin{align*}
		\|W_{\lambda,r,N}\|_{L^1(X)\to L^\infty(X)} \leq C(X)A(\delta) e^{-(\frac{1}{2}-\delta) r},
	\end{align*}
for some constant $A(\delta)>0$ dependent only upon $\delta$.
\end{lemma}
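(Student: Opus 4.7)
The plan is to reduce the estimate to a termwise bound via the triangle inequality, apply Lemma \ref{lem: Pt norm} to each summand, and sum a geometric series. The range hypothesis $Nr \leq R/4$ is precisely what makes Lemma \ref{lem: Pt norm} applicable to every $P_{jr}\Pi$ with $1 \leq j \leq N$.

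Concretely, I would first write
\begin{align*}
    \|W_{\lambda,r,N}\|_{L^1(X)\to L^\infty(X)} \leq \sum_{j=1}^N \frac{N-j}{N}|\cos(rs_\lambda j) + 1|\,\|P_{jr}\Pi\|_{L^1(X)\to L^\infty(X)}.
\end{align*}
Since $\lambda \geq \tfrac{1}{4}$, the spectral parameter $s_\lambda$ is real, so $|\cos(rs_\lambda j)+1| \leq 2$. Next, because $jr \leq Nr \leq R/4$ for every $1\leq j\leq N$, Lemma \ref{lem: Pt norm} yields $\|P_{jr}\Pi\|_{L^1\to L^\infty} \leq C(X)C_0(\delta)e^{-(1/2-\delta)jr}$ for any $\delta>0$. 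Combining, and using $\tfrac{N-j}{N}\leq 1$,
\begin{align*}
    \|W_{\lambda,r,N}\|_{L^1(X)\to L^\infty(X)} \leq 2C(X)C_0(\delta)\,e^{-(1/2-\delta)r}\sum_{j=1}^N e^{-(1/2-\delta)(j-1)r}.
\end{align*}

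Now I restrict to $\delta < \delta_0 := 1/2$, so that $\tfrac{1}{2}-\delta > 0$ and the geometric factor decays in $j$. Since $r\geq 1$ is a positive integer, $e^{-(1/2-\delta)r} \leq e^{-(1/2-\delta)}$, hence
\begin{align*}
    \sum_{j=1}^N e^{-(1/2-\delta)(j-1)r} \leq \sum_{j=0}^\infty e^{-(1/2-\delta)j} = \frac{1}{1-e^{-(1/2-\delta)}}.
\end{align*}
Setting $A(\delta) := \tfrac{2C_0(\delta)}{1-e^{-(1/2-\delta)}}$, which depends only on $\delta$, yields the desired bound
\begin{align*}
    \|W_{\lambda,r,N}\|_{L^1(X)\to L^\infty(X)} \leq C(X)A(\delta)e^{-(1/2-\delta)r}.
\end{align*}

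There is no real obstacle here; the argument is a direct triangle-inequality plus geometric series computation once Lemma \ref{lem: Pt norm} is available. The only items worth being careful about are verifying that the hypothesis $jr\leq R/4$ in Lemma \ref{lem: Pt norm} is guaranteed by $Nr\leq R/4$ (so the bound applies uniformly in $j$), confirming that $s_\lambda$ being real for $\lambda\geq\tfrac{1}{4}$ lets us bound the cosine factor by $2$, and making sure the constraint on $\delta$ is explicitly tracked so that the decay rate $\tfrac{1}{2}-\delta$ is positive and the geometric series converges independently of $r$ and $N$.
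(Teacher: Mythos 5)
Your proof is correct and takes essentially the same route as the paper: triangle inequality, apply Lemma \ref{lem: Pt norm} to each $P_{jr}\Pi$ (valid since $jr \le Nr \le R/4$), bound the cosine factor by $2$ using that $s_\lambda$ is real, and sum a geometric series using $r \ge 1$. The only cosmetic difference is in how the geometric series tail is handled: the paper splits off the $j=1$ term and imposes $\delta_0 = 0.01$ so that $2\sinh(\tfrac12-\delta)\ge 1$, arriving at $A(\delta) = 2C_0(\delta)(1+e^{1/2-\delta})$, whereas you compare directly with $\sum_{k\ge 0}e^{-(1/2-\delta)k}$ under the weaker (and more natural) restriction $\delta_0 = 1/2$, getting $A(\delta) = 2C_0(\delta)/(1-e^{-(1/2-\delta)})$; both yield a constant depending only on $\delta$, so either choice is fine.
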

\begin{proof}
From the conditions on $N$ and $r$, we have $jr\leq \frac{1}{4}R$ for each $j=1,\ldots, N$. Utilising Lemma \ref{lem: Pt norm} we obtain
	\begin{align*}
		\|W_{\lambda,r,N}\|_{L^1(X)\to L^\infty(X)} &\leq \sum_{j=1}^N \left|\frac{N-j}{N}(\cos(r s_\lambda j)+1)\right|\|P_{jr}\Pi\|_{L^1(X)\to L^\infty(X)}\\
		&\leq 2C(X)C_0(\delta)\sum_{j=1}^N e^{-(\frac{1}{2}-\delta) jr}\\
		&\leq 2C(X)C_0(\delta)\left(e^{-(\frac{1}{2}-\delta)r}+\sum_{j=2}^\infty  e^{-(\frac{1}{2}-\delta) jr}\right)\\
		&= 2C(X)C_0(\delta)\left(e^{-(\frac{1}{2}-\delta)r}+\frac{e^{-(\frac{1}{2}-\delta)r}}{e^{(\frac{1}{2}-\delta)r}-1}\right).
	\end{align*}
Set $\delta_0 = 0.01$ so that if $\delta < \delta_0$, then $2\sinh\left(\frac{1}{2}-\delta\right)	= e^{\left(\frac{1}{2}-\delta\right)}-e^{-\left(\frac{1}{2}-\delta\right)}\geq 1$. Under this condition, since $r\geq 1$, we obtain 
	\begin{align*}
		e^{-\left(\frac{1}{2}-\delta\right)r}\left(1+e^{\left(\frac{1}{2}-\delta\right)}\right) \leq e^{-\left(\frac{1}{2}-\delta\right)}\left(1+e^{\left(\frac{1}{2}-\delta\right)}\right) \leq e^{\left(\frac{1}{2}-\delta\right)}.
	\end{align*}
This is equivalent to 
	\begin{align*}
		e^{-\left(\frac{1}{2}-\delta\right)r} \leq e^{\left(\frac{1}{2}-\delta\right)}e^{-\left(\frac{1}{2}-\delta\right)r}\left(e^{\left(\frac{1}{2}-\delta\right)r}-1\right).
	\end{align*}
Plugging this estimate into the bounds for the operator norm above then gives
	\begin{align*}
		\|W_{\lambda,r,N}\|_{L^1(X)\to L^\infty(X)} \leq 2C(X)C_0(\delta)\left(1+e^{\left(\frac{1}{2}-\delta\right)}\right)e^{-\left(\frac{1}{2}-\delta\right)r}.
	\end{align*}
Setting $A(\delta) = 2C_0(\delta)\left(1+e^{\left(\frac{1}{2}-\delta\right)}\right)$ then gives the result.
\end{proof}

\subsection{Determining the Spectral Action and Proof of Theorem \ref{thm: det thm 1} for Tempered Eigenfunctions}
We now analyse the spectrum of the operator $W_{\lambda,r,N}$ defined above. We do this by testing it against the orthonormal basis of Laplacian eigenfunctions considered at the start of this section.

\begin{lemma}
\label{lem: spectral action}
Suppose that $\lambda\geq\frac{1}{4}$ and $\mu\in [0,\infty)$ are eigenvalues of the Laplacian on a compact hyperbolic surface $X$ with geodesic loops parameter $R=R(X)$ given by \eqref{assumption}. Fix positive integers $N$ and $r$ satisfying $Nr\leq \frac{1}{4}R$. If $\psi_\mu$ is an eigenfunction of the Laplacian on $X$ with eigenvalue $\mu$, then $\psi_\mu$ is an eigenfunction of the operator $W_{\lambda,r,N}$, and the following bounds on its eigenvalue hold.
	\begin{enumerate}
		\item If $\mu \geq \frac{1}{4}$, then the eigenvalue of $\psi_\mu$ under the action of $W_{\lambda,r,N}$ is at least $-1$.
		\item If $\mu \in [0,\frac{1}{4})$, then the eigenvalue of $\psi_\mu$ under the action of $W_{\lambda,r,N}$ is at least $0$.
		\item The eigenvalue of $\psi_\lambda$ under $W_{\lambda,r,N}$ is at least $\frac{N-4}{4\cosh(s_\lambda \pi/2)}$.
	\end{enumerate}
\end{lemma}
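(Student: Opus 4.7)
The plan is to diagonalise $W_{\lambda,r,N}$ against the orthonormal eigenbasis by using the Selberg transform. Since each $P_t$ was defined as the invariant integral operator associated to the inverse Selberg transform of $h_t(r) = \cos(rt)/\cosh(\pi r/2)$, Theorem 2.1 of the excerpt gives $P_t\psi_\mu = h_t(s_\mu)\psi_\mu$ for every Laplacian eigenfunction. The projection $\Pi$ kills the constant mode $\psi_0$ (so the three asserted bounds hold trivially when $\mu=0$) and acts as the identity on every $\psi_\mu$ with $\mu>0$. Therefore $\psi_\mu$ is an eigenfunction of $W_{\lambda,r,N}$ with eigenvalue
\begin{equation*}
\Lambda_\mu \;=\; \frac{1}{\cosh(\pi s_\mu/2)}\sum_{j=1}^N \frac{N-j}{N}\bigl(\cos(rs_\lambda j)+1\bigr)\cos(rs_\mu j).
\end{equation*}

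For (1) and (3), with $\mu\ge\tfrac14$ so $s_\mu\in\R$, I would apply the product-to-sum identity $\cos A\cos B = \tfrac12[\cos(A+B)+\cos(A-B)]$ and then use the Fejér identity $\sum_{j=1}^N \tfrac{N-j}{N}\cos(js) = \tfrac12(F_N(s)-1)$ (a direct rearrangement of the formula for $F_N$ stated just before the definition of $W_{\lambda,r,N}$) to rewrite
\begin{equation*}
\Lambda_\mu \;=\; \frac{1}{\cosh(\pi s_\mu/2)}\left[\frac{F_N\!\bigl(r(s_\lambda+s_\mu)\bigr)}{4}+\frac{F_N\!\bigl(r(s_\lambda-s_\mu)\bigr)}{4}+\frac{F_N(rs_\mu)}{2}-1\right].
\end{equation*}
Non-negativity of the Fejér kernel on $\R$ together with $\cosh(\pi s_\mu/2)\ge 1$ immediately gives $\Lambda_\mu\ge -1$, proving (1). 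Specialising to $\mu=\lambda$ makes $s_\lambda-s_\mu=0$, so $F_N(0)=N$ appears in the bracket while the remaining Fejér terms are non-negative; this yields $\Lambda_\lambda \ge (N/4-1)/\cosh(\pi s_\lambda/2) = (N-4)/(4\cosh(\pi s_\lambda/2))$, proving (3).

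For (2), with $\mu\in[0,\tfrac14)$ write $s_\mu = it_\mu$ for $t_\mu\in(0,\tfrac12]$. Here the Fejér rewriting is awkward because $r(s_\lambda\pm s_\mu)$ becomes genuinely complex, so I would instead work directly with $\Lambda_\mu$: we have $\cos(rs_\mu j)=\cosh(rt_\mu j)\ge 0$, $\cos(rs_\lambda j)+1\ge 0$, and $\cosh(\pi s_\mu/2)=\cos(\pi t_\mu/2)>0$ since $t_\mu\le\tfrac12$. Every summand is therefore non-negative, giving $\Lambda_\mu\ge 0$. The only real obstacle in the whole argument is the bookkeeping of the Fejér decomposition and being careful to handle the tempered and untempered regimes separately, since the non-negativity argument which works so cleanly in (2) would produce only a trivial bound in (1).
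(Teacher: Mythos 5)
Your argument is correct and is essentially the proof given in the paper: identical use of the product-to-sum identity, the Fej\'er rewriting $\sum_{j=1}^N\frac{N-j}{N}\cos(js)=\tfrac12(F_N(s)-1)$, non-negativity of $F_N$ on $\R$ together with $\cosh(\pi s_\mu/2)\ge1$ for (1), $F_N(0)=N$ for (3), and direct term-by-term non-negativity for the untempered case (2), with the $\mu=0$ constant mode handled separately via $\Pi$. The only cosmetic difference is that you merge the two Fej\'er pieces into a single bracketed expression, whereas the paper keeps them as two separate sums before bounding; the estimates are the same.
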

\begin{proof}
The fact that $\psi_\mu$ is an eigenfunction of $W_{\lambda,r,N}$ is immediate from the construction of the operator as a linear combination of $P_t\Pi$ for various values of $t$. To analyse the eigenvalue of $\psi_\mu$, we will rewrite it as a function of Fej\'{e}r kernels. If $\mu = 0$, then it is obvious from the definition of $\Pi$ that the eigenvalue is zero, so assume that $\mu>0$. Then,
	\begin{align*}
		W_{\lambda,r,N}\psi_\mu &=  \sum_{j=1}^N \frac{N-j}{N}(\cos(r s_\lambda j)+1)\frac{\cos(rs_\mu j)}{\cosh\left(\frac{s_\mu\pi}{2}\right)}\psi_\mu.
	\end{align*}
For small eigenvalues $\mu\in (0,\frac{1}{4})$, it is easy to see that the summation is non-negative. Indeed, $s_\mu$ will be purely imaginary and lie in $(0,\frac{1}{2})i$, so that $\frac{s_\mu\pi}{2} \in (0,\frac{\pi}{4})i$ and each term in the summation is non-negative. 
To deal with values of $\mu$ at least $\frac{1}{4}$, we rewrite the above eigenvalue by splitting the summation. Notice that
	\begin{align*}
		\frac{1}{\cosh\left(\frac{s_\mu\pi}{2}\right)}&\sum_{j=1}^N \frac{N-j}{N}\cos(r s_\lambda j)\cos(r s_\mu j)\\
		&\hspace{-1cm}= \frac{1}{2\cosh\left(\frac{s_\mu\pi}{2}\right)}\sum_{j=1}^N \frac{N-j}{N}(\cos(jr (s_\lambda+s_\mu))+\cos(jr (s_\lambda-s_\mu)))\\
		&\hspace{-1cm}= \frac{1}{4\cosh\left(\frac{s_\mu\pi}{2}\right)}\left(1+2\sum_{j=1}^N \frac{N-j}{N}\cos(jr (s_\lambda+s_\mu))+1+2\sum_{j=1}^N \frac{N-j}{N}\cos(jr (s_\lambda-s_\mu))-2\right) \\
		&\hspace{-1cm}= \frac{1}{4\cosh\left(\frac{s_\mu\pi}{2}\right)}(F_N(r(s_\lambda+s_\mu))+F_N(r(s_\lambda-s_\mu))-2).
	\end{align*}
Similarly, we have 
	\begin{align*}
		\frac{1}{\cosh\left(\frac{s_\mu\pi}{2}\right)}&\sum_{j=1}^N \frac{N-j}{N}\cos(r s_\mu j)=\frac{1}{2\cosh\left(\frac{s_\mu\pi}{2}\right)}(F_N(rs_\mu)-1).
	\end{align*}
The eigenvalue can then be analysed by using properties of the Fej\'{e}r kernel. Indeed, we have that $F_N(s)\geq 0$ from the sine representation of the Fej\'{e}r kernel for all $s\in\R$. Thus, the eigenvalue is bounded below by 
	\begin{align*}
		\frac{1}{4\cosh\left(\frac{s_\mu\pi}{2}\right)}(0+0-2)+\frac{1}{2\cosh\left(\frac{s_\mu\pi}{2}\right)}(0-1) = -\frac{1}{\cosh\left(\frac{s_\mu\pi}{2}\right)} \geq -1.
	\end{align*}
For $\mu=\lambda$ we note that $F_N(0)=N$ so that a lower bound is given by 
	\begin{align*}
		\frac{1}{4\cosh\left(\frac{s_\lambda\pi}{2}\right)}(N+0-2)+\frac{1}{2\cosh\left(\frac{s_\lambda\pi}{2}\right)}(0-1) = \frac{N-4}{4\cosh\left(\frac{s_\lambda\pi}{2}\right)},
	\end{align*}
as required.
\end{proof}

Understanding the bounds on the spectrum of $W_{\lambda,r,N}$ allows one to obtain inequalities involving the matrix coefficients of certain functions under the operator. This is crucial since we will examine the action of $W_{\lambda,r,N}$ upon $\psi_\lambda\1_E$ via a decomposition over an orthonormal basis of eigenfunctions for $L^2(X)$. In fact, by manipulation of norms, we will see that the lower bounds on eigenvalues from Lemma \ref{lem: spectral action}, along with the upper bound for the operator norm in Lemma \ref{lem: operator norm}, will be sufficient to obtain a lower bound on the set volume.

\begin{theorem}
\label{thm: deterministic result}
Fix $\varepsilon>0$ and suppose that $X$ is a compact hyperbolic surface. Let $\psi_\lambda$ be an $L^2$-normalised eigenfunction of the Laplacian on $X$ with eigenvalue $\lambda\geq\frac{1}{4}$, and suppose that $E\subseteq X$ is a measurable set for which
	\begin{align*}
		\|\psi_\lambda\1_E\|_2^2 =\varepsilon.
	\end{align*}
Suppose that $R=R(X)$ and $C(X)$ are parameters for the surface $X$ from \eqref{assumption} with $R$ sufficiently large. Then, there exists some universal constant $C>0$ for which
	\begin{align*}
		\mathrm{Vol}(E) \geq \frac{C\varepsilon}{C(X)} e^{d(\lambda) \varepsilon R},
	\end{align*}
where $d(\lambda)$ can be taken to be 
	\begin{align*}
		d(\lambda) = \frac{1}{256\cosh\left(\frac{s_\lambda \pi}{2}\right)}.
	\end{align*}
\end{theorem}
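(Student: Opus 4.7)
The plan is to execute step (4) of the outline in Subsection \ref{subsec: proof outline}: apply $W_{\lambda,r,N}$ to $\psi_\lambda\1_E$, then sandwich the matrix coefficient $\langle W_{\lambda,r,N}(\psi_\lambda\1_E),\psi_\lambda\1_E\rangle$ between a spectral lower bound and an $L^1 \to L^\infty$ upper bound, and finally optimise the two free parameters $N$ and $r$. First, expand $\psi_\lambda\1_E = \sum_{j\geq 0} a_j \psi_{\lambda_j}$ in the orthonormal basis. Parseval gives $\sum_j a_j^2 = \varepsilon$. Since $\psi_\lambda$ itself lies in this basis, the Fourier coefficient along $\psi_\lambda$ satisfies $a_\lambda = \int_E |\psi_\lambda|^2\,\mathrm{d}\mu = \varepsilon$, so $a_\lambda^2 = \varepsilon^2$ while $\sum_{j:\lambda_j\neq\lambda} a_j^2 \leq \varepsilon - \varepsilon^2 \leq \varepsilon$.

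Next, use Lemma \ref{lem: spectral action} to lower-bound the diagonal form of $W_{\lambda,r,N}$: the $\psi_\lambda$-component contributes at least $\frac{N-4}{4\cosh(s_\lambda\pi/2)}\,\varepsilon^2$, while the tempered components ($\lambda_j \geq 1/4$, $\lambda_j \neq \lambda$) contribute at least $-\sum a_j^2 \geq -\varepsilon$ and the untempered (including zero) components contribute a non-negative amount. Hence
\begin{align*}
\langle W_{\lambda,r,N}(\psi_\lambda\1_E),\psi_\lambda\1_E\rangle \;\geq\; \frac{N-4}{4\cosh(s_\lambda\pi/2)}\,\varepsilon^2 \;-\; \varepsilon.
\end{align*}
On the other side, factoring the inner product through $L^\infty$ and $L^1$ and applying Lemma \ref{lem: operator norm} together with Cauchy--Schwarz $\|\psi_\lambda\1_E\|_1 \leq \sqrt{\varepsilon\,\mathrm{Vol}(E)}$ yields
\begin{align*}
|\langle W_{\lambda,r,N}(\psi_\lambda\1_E),\psi_\lambda\1_E\rangle|\;\leq\; \|W_{\lambda,r,N}\|_{L^1\to L^\infty}\,\|\psi_\lambda\1_E\|_1^2 \;\leq\; C(X) A(\delta)\, e^{-(\frac{1}{2}-\delta)r}\,\varepsilon\,\mathrm{Vol}(E),
\end{align*}
valid provided $Nr \leq R/4$ and $\delta<\delta_0$.

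The final step is to balance the two parameters. Choose $N$ large enough that the spectral lower bound dominates the $-\varepsilon$ penalty; concretely, take $N = \lceil 4 + 16\cosh(s_\lambda\pi/2)/\varepsilon\rceil$, so that the lower bound above is at least $3\varepsilon$. Then set $r = \lfloor R/(4N)\rfloor$, which for $R$ sufficiently large is a positive integer of size comparable to $R\varepsilon/(64\cosh(s_\lambda\pi/2))$. Combining the two bounds and rearranging,
\begin{align*}
\mathrm{Vol}(E) \;\geq\; \frac{3\varepsilon}{C(X) A(\delta)}\, e^{(\frac{1}{2}-\delta)r} \;\geq\; \frac{C\varepsilon}{C(X)}\, e^{d(\lambda)\varepsilon R},
\end{align*}
for a universal constant $C>0$, where sending $\delta$ small and absorbing rounding losses into the constant gives the claimed exponent $d(\lambda) = 1/(256\cosh(s_\lambda\pi/2))$.

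\textbf{Main obstacle.} The delicate point is the simultaneous optimisation of $N$ and $r$. Taking $N$ large is necessary to make the ``signal'' term $\frac{N-4}{4\cosh(s_\lambda\pi/2)}\varepsilon^2$ beat the $-1$ contribution that the other tempered eigenfunctions can each carry, but this forces $r \leq R/(4N)$ to shrink, which weakens the exponential decay of $\|W_{\lambda,r,N}\|_{L^1\to L^\infty}$. The optimal trade-off is $N \sim \cosh(s_\lambda\pi/2)/\varepsilon$ and $r \sim R\varepsilon/\cosh(s_\lambda\pi/2)$; this is where the factors $\varepsilon$ and $1/\cosh(s_\lambda\pi/2)$ in the exponent of the final bound are produced, and where the ``sufficiently large $R$'' hypothesis is used to guarantee that a valid positive integer $r$ exists.
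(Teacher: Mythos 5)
Your proposal is correct and follows essentially the same route as the paper: both sandwich the matrix coefficient $\langle W_{\lambda,r,N}(\psi_\lambda\1_E),\psi_\lambda\1_E\rangle$ between the spectral lower bound of Lemma \ref{lem: spectral action} and the $L^1\to L^\infty$ bound of Lemma \ref{lem: operator norm}, and both take $N\sim \cosh(s_\lambda\pi/2)/\varepsilon$ and $r\sim R/N$ subject to $Nr\leq R/4$. The only differences are minor bookkeeping ones -- you expand in the coefficients $a_j$ where the paper names the pieces $f_{\mathrm{temp}}$, $f_{\mathrm{untemp}}$, and your exact choices ($N=\lceil 4+16\cosh/\varepsilon\rceil$, $r=\lfloor R/(4N)\rfloor$) differ slightly from the paper's ($N=\lfloor 8\varepsilon^{-1}\cosh\rfloor$, $r=\lceil R/(8N)\rceil$) but lead to the same exponent.
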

\begin{proof}
Set the parameters $r$ and $N$ as follows:
	\begin{align*}
		N &= \left\lfloor 8\varepsilon^{-1}\cosh\left(\frac{s_\lambda \pi}{2}\right)\right\rfloor,\\
		r &= \left\lceil \frac{1}{8}N^{-1}R \right\rceil.
	\end{align*}
Since $\psi_\lambda$ is $L^2$-normalised, the parameter $\varepsilon$ is bounded above by $1$. Thus $N \geq 1$, and both $N$ and $r$ are positive integers. Additionally, 
	\begin{align*}
		rN \leq \frac{1}{8}R + N \leq \frac{1}{4}R,
	\end{align*}
when $R \geq 64\varepsilon^{-1}\cosh\left(\frac{s_\lambda\pi}{2}\right)$. Thus, for sufficiently large $R$, the parameters $N$ and $r$ satisfy the hypotheses of Lemmas \ref{lem: operator norm} and \ref{lem: spectral action}. Fix $\delta>0$ less than the universal constant $\delta_0$ from Lemma \ref{lem: operator norm}. \par 

By use of the H\"{o}lder and Cauchy-Schwarz inequalities,
	\begin{align*}
		|\langle W_{\lambda,r,N}(\psi_\lambda\1_E),\psi_\lambda\1_E\rangle|&\leq \|W_{\lambda,r,N}(\psi_\lambda\1_E)\overline{\psi_\lambda\1_E}\|_1\\
		&\leq \|W_{\lambda,r,N}(\psi_\lambda\1_E)\|_\infty\|\psi_\lambda\1_E\|_1\\
		&\leq \|W_{\lambda,r,N}\|_{L^1\to L^\infty}\|\psi_\lambda\1_E\|_1^2\\
		&\leq \|W_{\lambda,r,N}\|_{L^1\to L^\infty}\Vol(E)\|\psi_\lambda\1_E\|_2^2\\
		&= \|W_{\lambda,r,N}\|_{L^1\to L^\infty}\Vol(E)\varepsilon.
	\end{align*}
Applying the operator norm bound of Lemma \ref{lem: operator norm} then gives
	\begin{align*}
		|\langle W_{\lambda,r,N}(\psi_\lambda\1_E),\psi_\lambda\1_E\rangle|&\leq C(X)A(\delta)e^{-(\frac{1}{2}-\delta)r}\Vol(E)\varepsilon.
	\end{align*}
We now seek a lower bound on this same inner product. We do this by considering the action of the operator $W_{\lambda,r,N}$ on the spectral decomposition of $\psi_\lambda\1_E$ over the orthonormal basis. Indeed, write
	\begin{align*}
		\psi_\lambda\1_E = \langle \psi_\lambda\1_E,\psi_\lambda\rangle\psi_\lambda + f_{\mathrm{temp}} + f_{\mathrm{untemp}},
	\end{align*}
where $f_{\mathrm{temp}}$ corresponds to the tempered part of the spectrum with the term corresponding to $\psi_\lambda$ removed, and $f_{\mathrm{untemp}}$ corresponds to the untempered part of the spectrum. From Lemma \ref{lem: spectral action}, we known the action of $W_{\lambda,r,N}$ on each of these pieces of the decomposition and thus, 
	\begin{align*}
		 \langle W_{\lambda,r,N}(\langle\psi_\lambda \1_E,\psi_\lambda\rangle\psi_\lambda),\langle\psi_\lambda \1_E,\psi_\lambda\rangle\psi_\lambda\rangle &\geq \varepsilon^{-1}\|\psi_\lambda\|_2^2|\langle\psi_\lambda \1_E,\psi_\lambda\rangle|^2=\varepsilon^{-1}|\langle\psi_\lambda \1_E,\psi_\lambda\rangle|^2,\\
		 \langle W_{\lambda,r,N}(f_{\mathrm{temp}}),f_{\mathrm{temp}}\rangle &\geq - \|f_{\mathrm{temp}}\|_2^2,\\
		 \langle W_{\lambda,r,N}(f_{\mathrm{untemp}}),f_{\mathrm{untemp}}\rangle &\geq 0.
	\end{align*}
By using orthogonality and these inequalities, we see that
	\begin{align}
	\label{eqn:lowerbound}
		\langle W_{\lambda,r,N}(\psi_\lambda\1_E),\psi_\lambda\1_E\rangle &\geq \varepsilon^{-1}|\langle\psi_\lambda\1_E,\psi_\lambda\rangle|^2-\|f_{\mathrm{temp}}\|_2^2.
	\end{align}
Now, notice that 
	\begin{align*}
		|\langle\psi_\lambda\1_E,\psi_\lambda\rangle| = \|\psi_\lambda\1_E\|_2^2 > \varepsilon,
	\end{align*}
and also by an application of Pythagoras' theorem that
	\begin{align*}
		\|f_{\mathrm{temp}}\|_2^2 &\leq \|\psi_\lambda\1_E\|_2^2 - |\langle\psi_\lambda\1_E,\psi_\lambda\rangle|^2\\
		&= \|\psi_\lambda\1_E\|_2^2(1-\|\psi_\lambda\1_E\|_2^2)\\
		&\leq \|\psi_\lambda\1_E\|_2^2 (1-\varepsilon).
	\end{align*}
Putting these into the lower bound of equation \eqref{eqn:lowerbound} gives
	\begin{align*}
		\langle W_{\lambda,r,N}(\psi_\lambda\1_E),\psi_\lambda\1_E\rangle \geq \|\psi_\lambda\1_E\|_2^2(1-(1-\varepsilon)) \geq 
		 \varepsilon^2.
	\end{align*}
Combining the upper and lower bounds on the inner product then provides
	\begin{align*}
		\mathrm{Vol}(E) \geq \frac{C\varepsilon}{C(X)}e^{(\frac{1}{2}-\delta)r},
	\end{align*}
where $C= \frac{1}{A(\delta)}$, and the $\delta$ dependence is suppressed since it is fixed. We now compute using the assigned values of $\delta, r$ and $N$ that
	\begin{align*}
		\left(\frac{1}{2}-\delta\right)r \geq \frac{1}{32}N^{-1}R \geq \frac{\varepsilon R}{256\cosh\left(\frac{s_\lambda\pi}{2}\right)}.
	\end{align*}
This concludes the proof with 
	\begin{align*}
		d(\lambda) = \frac{1}{256\cosh\left(\frac{s_\lambda\pi}{2}\right)}.
	\end{align*}
\end{proof}

\section{Delocalisation of Untempered Eigenfunctions on Large Surfaces}
We now turn to studying the eigenfunctions corresponding to small eigenvalues. As before, let $X=\Gamma\backslash\H$ be a compact hyperbolic surface with associated fundamental domain $D\subseteq \H$, and let $E\subseteq X$ be a measurable subset. We will suppose this time that $\psi_\lambda$ is an eigenfunction of the Laplacian with eigenvalue $\lambda$ contained in the interval $(0,\frac{1}{4}-\sigma)$ for some $\sigma>0$. This in particular means that the spectral parameter $s_\lambda$ for the eigenvalue is contained in the set $(\sqrt{\sigma},\frac{1}{2})i$. \par 
The methodology for bounding the volume of $E$ will follow the same steps as in the tempered case. In fact, one can obtain an identical lower bound on the volume by using the work of the previous section, along with the operator
	\begin{align*}
		W_{\lambda,r,N} := W_{\frac{1}{4},r,N},
	\end{align*}
for $\lambda\in (0,\frac{1}{4})$. We instead opt here to use a different operator which allows us to obtain a stronger delocalisation result, by removing the $\varepsilon$ dependence from the exponent of the volume lower bound. \par 

The operator we use will be a rescaled ball-averaging operator on the surface. The kernel of this operator is given by
	\begin{align*}
		k_{t,\lambda}(\rho) = \frac{\1_{\{\rho\leq t\}}(\rho)}{\cosh(t)^{\frac{1}{2}(1+\sqrt{\sigma})}}.
	\end{align*}
In the usual way, we obtain an operator acting on functions on the surface through the following formula:
	\begin{align*} 
		B_{t,\lambda}f(z) = \frac{1}{\cosh(t)^{\frac{1}{2}(1+\sqrt{\sigma})}}\int_D \sum_{\gamma\in \Gamma} \1_{\{d(z,\gamma w)\leq t\}}(w)f(w)\df\mu(w).
	\end{align*}
The $L^1(X)\to L^\infty(X)$ operator norm of $B_{t,\lambda}$ is then bounded by
	\begin{align*}
		\sup_{z,w\in D}\frac{1}{\cosh(t)^{\frac{1}{2}(1+\sqrt{\sigma})}}\sum_{\gamma\in \Gamma} |\1_{\{d(z,\gamma w)\leq t\}}(w)|.
	\end{align*}
Suppose that $R(X), C(X)$ and $C_0(\delta)$ are parameters associated to the surface $X$ through condition \eqref{assumption}. For $t\leq R(X)$, and fixed $z,w\in D$, the number of terms in the summand is bounded by $C(X)C_0(\delta)e^{\delta t}$. The $L^1(X)\to L^\infty(X)$ operator norm of $B_{t,\lambda}$ for $t\leq R(X)$ is thus bounded by
	\begin{align}
	\label{eq: B_R,lambda norm}
		\|B_{t,\lambda}\|_{L^1(X)\to L^\infty(X)} \leq C(X)C_0(\delta) e^{\frac{1}{2}(2\delta -1-\sqrt{\sigma})t},
	\end{align}
for any $\delta>0$. This bound serves the same function as Lemma \ref{lem: operator norm} from the previous section. The second key ingredient required is an analogue of Lemma \ref{lem: spectral action} to understand the spectral action of $B_{t,\lambda}$. Recall that the spectral action of operators defined through a kernel in this way, is obtained from the Selberg transform of the kernel. This can be computed as follows.

\begin{lemma}
The Selberg transform of the function $k_{t,\lambda}(\rho)$ is given by
	\begin{align*}
		h_{t,\lambda}(r) = \frac{4\sqrt{2}}{\cosh(t)^{\frac{1}{2}\sqrt{\sigma}}}\int_0^t \cos(ru)\sqrt{1-\frac{\cosh(u)}{\cosh(t)}}\df u.
	\end{align*}
\end{lemma}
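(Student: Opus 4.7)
The plan is to verify the formula by direct computation from the definitions of the Selberg transform recalled in Section~\ref{sec: background}. There are only two steps: first evaluate the intermediate function $g(u)$ associated with $k_{t,\lambda}$ in closed form, and then take its Fourier transform, using evenness to reduce to the cosine integral over $[0,t]$.

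For the first step, I would substitute the explicit radial kernel
\begin{align*}
k_{t,\lambda}(\rho) = \frac{\1_{\{\rho\leq t\}}(\rho)}{\cosh(t)^{\frac{1}{2}(1+\sqrt{\sigma})}}
\end{align*}
into
\begin{align*}
g(u) = \sqrt{2}\int_{|u|}^{+\infty} \frac{k_{t,\lambda}(\rho)\sinh\rho}{\sqrt{\cosh\rho-\cosh u}}\df\rho.
\end{align*}
Because of the indicator, the integrand vanishes outside $\rho\in[|u|,t]$, so $g(u)=0$ for $|u|>t$, and for $|u|\leq t$ one is left with a constant times $\int_{|u|}^t \sinh\rho/\sqrt{\cosh\rho-\cosh u}\,\df\rho$. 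This integral is computed by the substitution $v = \cosh\rho - \cosh u$, $\df v = \sinh\rho\,\df\rho$, which immediately gives $2\sqrt{\cosh t-\cosh u}$. Therefore
\begin{align*}
g(u) = \frac{2\sqrt 2\,\sqrt{\cosh t - \cosh u}}{\cosh(t)^{\frac{1}{2}(1+\sqrt{\sigma})}}\,\1_{\{|u|\leq t\}}(u) = \frac{2\sqrt 2}{\cosh(t)^{\frac{1}{2}\sqrt{\sigma}}}\sqrt{1-\frac{\cosh u}{\cosh t}}\,\1_{\{|u|\leq t\}}(u),
\end{align*}
after pulling out a factor of $\sqrt{\cosh t}$ from the numerator to combine with the denominator. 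This closed form is the one piece of actual content in the lemma; the choice of normalisation $\cosh(t)^{-\frac12(1+\sqrt\sigma)}$ in $k_{t,\lambda}$ is precisely what cancels the $\sqrt{\cosh t}$ produced by the substitution and leaves the clean exponent $\tfrac12\sqrt\sigma$ in the denominator.

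For the second step, I would apply the Fourier transform
\begin{align*}
h_{t,\lambda}(r) = \int_{-\infty}^{+\infty} e^{iru}g(u)\df u = \int_{-t}^{t} e^{iru}g(u)\df u,
\end{align*}
observe that $g$ is an even function of $u$ (both $\cosh u$ and the indicator are even), split $e^{iru}=\cos(ru)+i\sin(ru)$, note that the sine part integrates to zero by oddness, and fold the $[-t,0]$ interval into $[0,t]$ to produce the factor of $2$. Substituting the expression for $g$ then yields exactly
\begin{align*}
h_{t,\lambda}(r) = \frac{4\sqrt 2}{\cosh(t)^{\frac{1}{2}\sqrt{\sigma}}}\int_0^t \cos(ru)\sqrt{1-\frac{\cosh u}{\cosh t}}\,\df u,
\end{align*}
as claimed. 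There is no genuine obstacle here; the only thing to be careful about is tracking the normalising factor through the substitution, which is precisely where the slightly unusual power $\tfrac12(1+\sqrt\sigma)$ in the definition of $k_{t,\lambda}$ pays off.
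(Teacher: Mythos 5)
Your proposal is correct and follows essentially the same two-step computation as the paper: compute $g(u)$ in closed form via the elementary substitution under the square root, then take the Fourier transform and use evenness of $g$ to reduce to the cosine integral over $[0,t]$. The paper's proof is simply a terser version of yours, stating the closed form of $g(u)$ and the resulting $h_{t,\lambda}(r)$ without spelling out the substitution or the parity argument.
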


\begin{proof}
We use the formulae quoted in Section \ref{sec: background} to determine the Selberg transform. Firstly notice that
	\begin{align*}
		g(u)&= \frac{\sqrt{2}}{\cosh(t)^{\frac{1}{2}(1+\sqrt{\sigma})}}\int_{|u|}^t \frac{\sinh(\rho)}{\sqrt{\cosh(\rho)-\cosh(u)}}\df \rho \1_{\{|u|\leq t\}}(u)\\
		&=\frac{2\sqrt{2}}{\cosh(t)^{\frac{1}{2}(1+\sqrt{\sigma})}}\sqrt{\cosh(t)-\cosh(u)} \1_{\{|u|\leq t\}}(u).
	\end{align*}
Thus, one obtains
	\begin{align*}
		h_{t,\lambda}(r) &= \frac{2\sqrt{2}}{\cosh(t)^{\frac{1}{2}\sqrt{\sigma}}}\int_{-t}^t e^{iru}\sqrt{1-\frac{\cosh(u)}{\cosh(t)}}\df u\\
		&= \frac{4\sqrt{2}}{\cosh(t)^{\frac{1}{2}\sqrt{\sigma}}}\int_{0}^t \cos(ru)\sqrt{1-\frac{\cosh(u)}{\cosh(t)}}\df u.
	\end{align*}
\end{proof}

From now on, we will work with the operator $B_{t,\lambda}$ for $t=R(X)$. To obtain the desired bounds on the spectral action, we will require the following lemma that we isolate for readability. The result is a purely technical calculation, and so the reader who wishes to follow the main line of argument for the volume bounds will be at no loss by skipping over the proof.

\begin{lemma}
\label{lem: technical lemma}
Suppose that $a \in (\sqrt{\sigma},\frac{1}{2})$ for some $\sigma >0$, then for all $R\geq 2$, 
	\begin{align*}
		\int_0^R \cosh(au)\sqrt{1-\frac{\cosh(u)}{\cosh(R)}}\df u \geq \frac{1}{3}\sinh(\sqrt{\sigma}R).
	\end{align*}
\end{lemma}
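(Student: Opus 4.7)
The plan is to lower-bound the integral by restricting to a subinterval of $[0, R]$ on which both factors of the integrand admit clean elementary estimates, and then reduce to a one-variable inequality in $\sqrt{\sigma}$. Because the right-hand side is independent of $a$, I expect the worst case to occur as $a \to \sqrt{\sigma}^+$, and the monotonicity of $a \mapsto \sinh(ax)/a$ will make this precise.

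First, the crude inequalities $\cosh(u) \leq e^u$ and $\cosh(R) \geq \tfrac{1}{2}e^R$ yield $\cosh(u)/\cosh(R) \leq 2 e^{u - R}$. Hence on the subinterval $[0, R - \ln 4]$, which is nonempty since $R \geq 2 > \ln 4$, the radicand is bounded below by $1/2$, so $\sqrt{1 - \cosh(u)/\cosh(R)} \geq 1/\sqrt{2}$. Restricting the integration and computing the resulting integral of $\cosh(au)$ in closed form gives
\[
\int_0^R \cosh(au)\sqrt{1 - \frac{\cosh(u)}{\cosh(R)}}\,\df u \;\geq\; \frac{\sinh(a(R-\ln 4))}{a\sqrt{2}}.
\]
A short calculus check shows that $a \mapsto \sinh(ax)/a$ is increasing on $(0, \infty)$ for each fixed $x > 0$ (its derivative has the sign of $ax\coth(ax) - 1$, which is nonnegative since $y\coth(y) \geq 1$), so replacing $a$ by $\sqrt{\sigma}$ reduces the claim to
\[
\frac{\sinh(\sqrt{\sigma}(R - \ln 4))}{\sqrt{\sigma}\sqrt{2}} \;\geq\; \frac{\sinh(\sqrt{\sigma}R)}{3}.
\]

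To finish, I would expand the numerator on the left using the subtraction formula and divide by $\sinh(\sqrt{\sigma}R)$; the resulting ratio $\cosh(\sqrt{\sigma}\ln 4) - \coth(\sqrt{\sigma}R)\sinh(\sqrt{\sigma}\ln 4)$ is increasing in $R$ (since $\coth$ is decreasing), so its infimum over $R \geq 2$ occurs at $R = 2$, where a small algebraic manipulation identifies it with $\sinh(\sqrt{\sigma}(2-\ln 4))/\sinh(2\sqrt{\sigma})$. The claim thus reduces to the one-variable inequality
\[
3\sinh(t(2-\ln 4)) \;\geq\; t\sqrt{2}\sinh(2t), \qquad t = \sqrt{\sigma} \in (0, 1/2),
\]
which is elementary: both sides vanish at $t = 0$, the left-hand side has positive derivative $3(2-\ln 4)$ at $0$ while the right-hand side has vanishing derivative, the second derivative of the difference stays negative on $[0, 1/2]$ (the $-4\sqrt{2}\cosh(2t)$ term dominates), and direct evaluation at $t = 1/2$ (permissible by continuity) confirms strict positivity of the difference.

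The main obstacle I anticipate is the calibration of constants in this last step: the factor $1/3$ is essentially tight at the corner $R = 2$, $\sqrt{\sigma} \to 1/2$ (the margin there is only about ten percent), so the choices of cutoff $\ln 4$ and associated lower bound $1/\sqrt{2}$ are finely balanced, and a slightly different cutoff would produce a different but similar constant on the right-hand side.
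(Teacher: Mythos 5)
Your proof is correct and takes a genuinely different route from the paper's. The paper keeps the full interval of integration and replaces the square root by its square, using $\sqrt{x} \geq x$ for $x \in [0,1]$ together with $\cosh(au) \geq \cosh(\sqrt{\sigma}u)$; the resulting integral is then evaluated in closed form, and two rounds of hyperbolic angle-sum identities reduce the claim to a $\tanh\cdot\coth$ inequality that is dispatched by a one-variable calculus argument on $(0,\tfrac12]$. You instead truncate to $[0, R - \ln 4]$, where the crude bound $\cosh(u)/\cosh(R) \leq 2e^{u-R} \leq \tfrac12$ makes the square-root factor uniformly at least $1/\sqrt{2}$; the leftover integral of $\cosh(au)$ is trivial, and after using the monotonicity of $a \mapsto \sinh(ax)/a$ to pass to $a = \sqrt{\sigma}$, you land on the single-variable inequality $3\sinh\bigl(t(2-\ln 4)\bigr) \geq t\sqrt{2}\sinh(2t)$ on $(0,\tfrac12)$, verified by a concavity argument anchored at $t = 0$ and $t = \tfrac12$. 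Both proofs are valid. Your version trades the paper's hyperbolic-identity algebra for a cleaner conceptual step (truncate and bound uniformly), but pays for it with a tighter numerical margin in the endpoint check — as you correctly flag, at $(R, t) = (2, \tfrac12)$ the slack is only on the order of ten percent, whereas the paper's reduction has somewhat more room. One could argue the paper's method is slightly more robust to perturbation of the constant $\tfrac13$, while yours is shorter to present. Either is acceptable; the underlying mechanisms (how the radicand is controlled, and where the minimization in $R$ and $\sigma$ happens) are different enough that this is not a cosmetic variant.
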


\begin{proof}
Since $\cosh(au)$ is an increasing function in $u$, the integrand is non-negative and the expression under the square root is contained in $[0,1]$, we may bound the integral as follows:
	\begin{align*}
		\int_0^R\cosh(au)\sqrt{1-\frac{\cosh(u)}{\cosh(R)}}\df u &\geq \int_0^R\cosh(\sqrt{\sigma} u)\left(1-\frac{\cosh(u)}{\cosh(R)}\right)\df u \\
		&= \frac{\sinh(R\sqrt{\sigma})}{\sqrt{\sigma}}-\frac{1}{2}\left(\frac{\sinh((\sqrt{\sigma}+1)R)}{(\sqrt{\sigma}+1)\cosh(R)}+\frac{\sinh((1-\sqrt{\sigma})R)}{(1-\sqrt{\sigma})\cosh(R)}\right).
	\end{align*}
	This expression is then equal to
	\begin{align}
	\label{eq:boundingexp}
	\frac{2\sinh(R\sqrt{\sigma})\cosh(R)(1-\sigma)-\sinh((\sqrt{\sigma}+1)R)(\sqrt{\sigma}-\sigma)-\sinh((1-\sqrt{\sigma})R)(\sigma+\sqrt{\sigma})}{2\sqrt{\sigma}(\sqrt{\sigma}+1)(1-\sqrt{\sigma})\cosh(R)}.
	\end{align}
Since $\sqrt{\sigma}\leq\frac{1}{2}$, the denominator is bounded above by $\frac{3}{2}\cosh(R)$, and so we seek a lower bound on the numerator. Using angle sum formulae for the hyperbolic functions we see that 
	\begin{align*}
		-\sqrt{\sigma}(\sinh((\sqrt{\sigma}+1)R)+\sinh((1-\sqrt{\sigma})R)) &= -2\sqrt{\sigma}\sinh(R)\cosh(R\sqrt{\sigma})\\
		\sigma(\sinh((\sqrt{\sigma}+1)R)-\sinh((1-\sqrt{\sigma})R)) &= 2\sigma\sinh(R\sqrt{\sigma})\cosh(R).
	\end{align*}
The numerator of equation \eqref{eq:boundingexp} is thus equal to
	\begin{align*}
		2\sinh(R\sqrt{\sigma})\cosh(R)-2\sqrt{\sigma}\sinh(R)\cosh(R\sqrt{\sigma}).
	\end{align*}
Which again, using angle sum formulae for the hyperbolic functions, reduces to 
	\begin{align*}
		(2-2\sqrt{\sigma})\sinh(R\sqrt{\sigma})\cosh(R) - 2\sqrt{\sigma}\sinh((1-\sqrt{\sigma})R).
	\end{align*}
Using this lower bound on the numerator, and the upper bound on the denominator shown above, we see that \eqref{eq:boundingexp} is bounded below by 
	\begin{align*}
		\frac{2}{3}(2-2\sqrt{\sigma})\sinh(R\sqrt{\sigma})-\frac{4\sqrt{\sigma}}{3}\frac{\sinh((1-\sqrt{\sigma})R)}{\cosh(R)}.
	\end{align*}
\textit{Claim.} For $R\geq 2$, 
	\begin{align*}
	\frac{4\sqrt{\sigma}}{3}\frac{\sinh((1-\sqrt{\sigma})R)}{\cosh(R)} \leq \frac{1}{3}(2-2\sqrt{\sigma})\sinh(R\sqrt{\sigma}).
	\end{align*}
If true, this shows that the integral is bounded below by 
	\begin{align*}
		\frac{1}{3}(2-2\sqrt{\sigma})\sinh(R\sqrt{\sigma})\geq \frac{1}{3}\sinh(R\sqrt{\sigma}),
	\end{align*}
whenever $R\geq 2$, and thus the result follows.\par 
\noindent \textit{Proof of Claim.} By using an angle sum formula expansion of $\sinh((1-\sqrt{\sigma})R)$, and rearranging the inequality, we see that it suffices to show that
	\begin{align*}
		1 \leq \left(\frac{2+2\sqrt{\sigma}}{4\sqrt{\sigma}}\right)\tanh(R\sqrt{\sigma})\coth(R).
	\end{align*}
For fixed $R\geq 2$, consider the function 
	\begin{align*}
		x\mapsto \left(\frac{2+2x}{4x}\right)\tanh(Rx),
	\end{align*}
defined for $x\in (0,\frac{1}{2}]$. By differentiating, one can see that this function has a single stationary point in the interval $(0,\frac{1}{2}]$ for $R\geq 2$. Moreover, the function is strictly increasing in a neighbourhood of zero, and strictly decreasing in a neighbourhood of $\frac{1}{2}$. Thus, the stationary point is a local maxima. The values taken by this function in the domain $(0,\frac{1}{2}]$ are then bounded below by the values taken at $x=\frac{1}{2}$ and as $x\to 0^+$; these are $\frac{3}{2}\tanh(\frac{1}{2}R)$ and $\frac{1}{2}R$ respectively. In either case, we can conclude that 
	\begin{align*}
		\left(\frac{2+2\sqrt{\sigma}}{4\sqrt{\sigma}}\right)\tanh(R\sqrt{\sigma})\coth(R) \geq 1,
	\end{align*}
when $R\geq 2$, as required. 
\end{proof}

\begin{lemma}
\label{lem: untemp spectral action}
Suppose that $\lambda\in (0,\frac{1}{4}-\sigma)$ and $\mu\in [0,\infty)$ are eigenvalues of the Laplacian on a compact hyperbolic surface $X$. Suppose that $R=R(X)$ is the parameter for the surface $X$ from condition \eqref{assumption}. If $\psi_\mu$ is an eigenfunction of the Laplacian on $X$ with eigenvalue $\mu$ then the following bounds hold for $R$ sufficiently large (dependent only upon $\sigma$ and $\varepsilon$). 
	\begin{enumerate}
		\item If $\mu\geq\frac{1}{4}$, then the eigenvalue of $\psi_\mu$ under the action of $B_{R,\lambda}$ is at least $-1$.
		\item If $\mu\in[0,\frac{1}{4})$, then the eigenvalue of $\psi_\mu$ under the action of $B_{R,\lambda}$ is at least $0$.
		\item The eigenvalue of $\psi_\lambda$ under $B_{R,\lambda}$ is at least $\varepsilon^{-1}$.
	\end{enumerate}
\end{lemma}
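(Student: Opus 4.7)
The strategy is to exploit the computation of the Selberg transform $h_{R,\lambda}$ made in the preceding lemma: since $B_{R,\lambda}$ is an invariant integral operator with radial kernel $k_{R,\lambda}$, any Laplacian eigenfunction $\psi_\mu$ with spectral parameter $s_\mu$ (so that $\mu=s_\mu^2+\tfrac14$) is automatically an eigenfunction of $B_{R,\lambda}$ with eigenvalue $h_{R,\lambda}(s_\mu)$. The three cases in the statement will then all reduce to bounding the explicit integral
\begin{align*}
h_{R,\lambda}(s_\mu)=\frac{4\sqrt{2}}{\cosh(R)^{\frac{1}{2}\sqrt{\sigma}}}\int_0^R\cos(s_\mu u)\sqrt{1-\frac{\cosh(u)}{\cosh(R)}}\,\df u.
\end{align*}

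For part (1), when $\mu\geq\tfrac14$ the spectral parameter $s_\mu$ is real and non-negative. The integrand is therefore bounded in absolute value by the integrand obtained at $s_\mu=0$, so
\begin{align*}
|h_{R,\lambda}(s_\mu)|\leq \frac{4\sqrt{2}}{\cosh(R)^{\frac{1}{2}\sqrt{\sigma}}}\int_0^R\sqrt{1-\frac{\cosh(u)}{\cosh(R)}}\,\df u\leq \frac{4\sqrt{2}\,R}{\cosh(R)^{\frac{1}{2}\sqrt{\sigma}}},
\end{align*}
where the last inequality just bounds the square root by $1$. Since $\sqrt{\sigma}>0$, the right-hand side tends to zero as $R\to\infty$, so for $R$ sufficiently large (depending only on $\sigma$) this quantity is at most $1$, giving $h_{R,\lambda}(s_\mu)\geq -1$.

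For part (2), when $\mu\in[0,\tfrac14)$ we have $s_\mu=i b$ with $b\in[0,\tfrac12)$, so that $\cos(s_\mu u)=\cosh(bu)\geq 0$. Together with the fact that $\sqrt{1-\cosh(u)/\cosh(R)}\geq 0$ on $[0,R]$, the integrand is pointwise non-negative, hence $h_{R,\lambda}(s_\mu)\geq 0$. For part (3), when $\mu=\lambda\in(0,\tfrac14-\sigma)$ the spectral parameter is $s_\lambda=ib_\lambda$ with $b_\lambda=\sqrt{\tfrac14-\lambda}\in(\sqrt{\sigma},\tfrac12)$, and the eigenvalue becomes
\begin{align*}
h_{R,\lambda}(s_\lambda)=\frac{4\sqrt{2}}{\cosh(R)^{\frac{1}{2}\sqrt{\sigma}}}\int_0^R\cosh(b_\lambda u)\sqrt{1-\frac{\cosh(u)}{\cosh(R)}}\,\df u.
\end{align*}
Applying Lemma \ref{lem: technical lemma} with $a=b_\lambda$ yields, for $R\geq 2$,
\begin{align*}
h_{R,\lambda}(s_\lambda)\geq \frac{4\sqrt{2}}{3}\cdot\frac{\sinh(\sqrt{\sigma}R)}{\cosh(R)^{\frac{1}{2}\sqrt{\sigma}}}.
\end{align*}
The right-hand side grows like $e^{\sqrt{\sigma}R/2}$ as $R\to\infty$, so it exceeds $\varepsilon^{-1}$ once $R$ is taken sufficiently large in terms of $\sigma$ and $\varepsilon$.

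The only genuine obstacle is case (1): a naive estimate on $h_{R,\lambda}$ at non-zero real $s_\mu$ would have to contend with cancellations in the oscillating cosine, but we bypass this entirely by the trivial comparison $|\cos(s_\mu u)|\leq 1$, which works \emph{uniformly} in $s_\mu$ and reduces the problem to the (easy) decay of the trivial bound $R/\cosh(R)^{\sqrt{\sigma}/2}$. The choice of the normalising factor $\cosh(R)^{\frac{1}{2}(1+\sqrt{\sigma})}$ in the kernel is thus precisely tuned to simultaneously kill the tempered contribution and amplify the contribution of the target eigenvalue $\lambda$.
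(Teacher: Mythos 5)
Your proof is correct and follows the paper's argument step for step: reduce each claim to a bound on the Selberg transform $h_{R,\lambda}(s_\mu)$, use the trivial estimate $|\cos(s_\mu u)|\leq 1$ and $\sqrt{1-\cosh(u)/\cosh(R)}\leq 1$ to get $|h_{R,\lambda}(s_\mu)|\leq 4\sqrt{2}R/\cosh(R)^{\sqrt{\sigma}/2}\to 0$ for $\mu\geq\tfrac14$, note pointwise non-negativity of the integrand for $\mu<\tfrac14$, and invoke Lemma \ref{lem: technical lemma} for $\mu=\lambda$. (One cosmetic slip: for $\mu\in[0,\tfrac14)$ you should have $b=|s_\mu|\in(0,\tfrac12]$ rather than $[0,\tfrac12)$, but since $\cosh(bu)>0$ for all real $b$ this does not affect the argument.)
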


\begin{proof}
First, suppose that $\mu\geq\frac{1}{4}$. Then,
	\begin{align*}
		h_{R,\lambda}(s_\mu) &\geq -\frac{4\sqrt{2}}{\cosh(R)^{\frac{1}{2}\sqrt{\sigma}}}\int_0^R\sqrt{1-\frac{\cosh(u)}{\cosh(R)}} \df u\\
		&\geq -\frac{4\sqrt{2}R}{\cosh(R)^{\frac{1}{2}\sqrt{\sigma}}},
	\end{align*}
For $R$ sufficiently large, this is bounded below by $-1$. The case when $\mu\in [0,\frac{1}{4})$ is trivial since the integrand is non-negative from the spectral parameter $s_\mu$ being purely imaginary. \par 

For the spectral action on $\psi_\lambda$, write $\lambda = \frac{1}{4} - a_\lambda^2$, so that the spectral parameter of $\lambda$ is $s_\lambda = a_\lambda i$. Then by defintion, 
	\begin{align*}
		h_{R,\lambda}(s_\lambda) = \frac{4\sqrt{2}}{\cosh(R)^{\frac{1}{2}\sqrt{\sigma}}}\int_0^R\cosh(a_\lambda u)\sqrt{1-\frac{\cosh(u)}{\cosh(R)}}\df u.
	\end{align*}
By assumption on $\lambda$, it follows that $a_\lambda \in (\sqrt{\sigma},\frac{1}{2})$. Lemma \ref{lem: technical lemma} then shows that
	\begin{align*}
		h_{R,\lambda}(s_\lambda) \geq \frac{4\sqrt{2}}{3}\frac{\sinh(R\sqrt{\sigma})}{\cosh(R)^{\frac{1}{2}\sqrt{\sigma}}},
	\end{align*}
whenever $R\geq 2$. This expression is subsequently bounded below by $\frac{1}{2}e^{\frac{1}{2}\sqrt{\sigma}R}-1$ which is at least $\varepsilon^{-1}$ whenever $R\geq \frac{2}{\sqrt{\sigma}}\log(2+2\varepsilon^{-1})$.
\end{proof}

We now combine the upper bound \eqref{eq: B_R,lambda norm} with Lemma \ref{lem: untemp spectral action} to obtain the desired delocalisation result for small eigenvalues.

\begin{theorem}
\label{thm: det result untemp}
Fix $\varepsilon>0$, and suppose that $X$ is a compact hyperbolic surface with $R=R(X)$ given by condition \eqref{assumption}. Suppose that $\lambda\in (0,\frac{1}{4}-\sigma)$ is an eigenvalue of the Laplacian on $X$, and $\psi_\lambda$ is an $L^2$-normalised eigenfunction with eigenvalue $\lambda$. If $E\subseteq X$ is a measurable set for which
	\begin{align*}
		\|\psi_\lambda\1_E\|_2^2 = \varepsilon,
	\end{align*}
then there exists a universal constant $C>0$, such that for $R$ sufficiently large (dependent only upon $\sigma$ and $\varepsilon$), 
	\begin{align*}
		\mathrm{Vol}(E) \geq \frac{C\varepsilon}{C(X)} e^{(\frac{1}{4}+\frac{1}{2}\sqrt{\sigma})R}.
	\end{align*}
\end{theorem}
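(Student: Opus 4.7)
The plan is to mirror the scheme used for Theorem \ref{thm: deterministic result}, substituting the rescaled ball averaging operator $B_{R,\lambda}$ for $W_{\lambda,r,N}$. Specifically, I will sandwich the matrix coefficient $\langle B_{R,\lambda}(\psi_\lambda \1_E), \psi_\lambda \1_E\rangle$ between an upper bound coming from the $L^1 \to L^\infty$ operator norm estimate \eqref{eq: B_R,lambda norm} and a lower bound coming from the spectral action of $B_{R,\lambda}$ given in Lemma \ref{lem: untemp spectral action}.

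For the upper bound, I would fix $\delta > 0$ small enough that $\tfrac{1}{2}(1-2\delta) \geq \tfrac{1}{4}$ (for instance $\delta = \tfrac{1}{4}$), and then apply the Hölder and Cauchy--Schwarz inequalities verbatim as in Theorem \ref{thm: deterministic result} to obtain
\[
|\langle B_{R,\lambda}(\psi_\lambda \1_E), \psi_\lambda \1_E\rangle| \leq \|B_{R,\lambda}\|_{L^1 \to L^\infty}\,\mathrm{Vol}(E)\,\varepsilon \leq C(X) C_0(\delta)\, e^{\frac{1}{2}(2\delta - 1 - \sqrt{\sigma})R}\,\mathrm{Vol}(E)\,\varepsilon.
\]

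For the lower bound, decompose $\psi_\lambda \1_E$ in the eigenbasis as $\langle \psi_\lambda \1_E, \psi_\lambda \rangle \psi_\lambda + f_{\mathrm{temp}} + f_{\mathrm{untemp}}$, with $f_{\mathrm{temp}}$ the tempered part with the $\psi_\lambda$ component removed. Lemma \ref{lem: untemp spectral action} then gives the three spectral inequalities needed: the $\psi_\lambda$ component contributes at least $\varepsilon^{-1}|\langle \psi_\lambda \1_E, \psi_\lambda\rangle|^2$, the tempered remainder contributes at least $-\|f_{\mathrm{temp}}\|_2^2$, and the untempered remainder is non-negative. Exactly as in the tempered case, the identity $|\langle \psi_\lambda \1_E, \psi_\lambda\rangle| = \|\psi_\lambda \1_E\|_2^2 = \varepsilon$ together with the Pythagorean bound $\|f_{\mathrm{temp}}\|_2^2 \leq \varepsilon(1-\varepsilon)$ yields the clean lower bound $\varepsilon^2$. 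Note the crucial role of the factor $\cosh(R)^{-\frac{1}{2}(1+\sqrt{\sigma})}$ in the definition of $B_{R,\lambda}$: it is precisely the rescaling that makes the eigenvalue of $\psi_\lambda$ grow like $e^{\frac{1}{2}\sqrt{\sigma}R}$, which in turn lets us dominate $\varepsilon^{-1}$ once $R$ is sufficiently large in terms of $\sigma$ and $\varepsilon$.

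Combining the two bounds and rearranging yields
\[
\mathrm{Vol}(E) \geq \frac{\varepsilon}{C_0(\delta) C(X)}\, e^{\frac{1}{2}(1 + \sqrt{\sigma} - 2\delta)R},
\]
and the choice $\delta = \tfrac{1}{4}$ gives exponent $\tfrac{1}{4} + \tfrac{1}{2}\sqrt{\sigma}$, producing the claimed bound with the universal constant $C = 1/C_0(\tfrac{1}{4})$. No genuine obstacle arises since the spectral work was isolated in Lemmas \ref{lem: technical lemma} and \ref{lem: untemp spectral action}; the only point requiring care is that the hypotheses on $R$ needed for those lemmas (namely $R \geq 2$ and $R \geq \tfrac{2}{\sqrt{\sigma}}\log(2+2\varepsilon^{-1})$) are precisely the ones encoded in the theorem's "$R$ sufficiently large depending on $\sigma$ and $\varepsilon$", so the combination step introduces no new constraints.
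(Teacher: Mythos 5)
Your proposal is correct and takes essentially the same route as the paper's proof: both sandwich $\langle B_{R,\lambda}(\psi_\lambda\1_E),\psi_\lambda\1_E\rangle$ between the operator norm bound of equation \eqref{eq: B_R,lambda norm} and the spectral lower bound $\varepsilon^2$ from Lemma \ref{lem: untemp spectral action}, then set $\delta=\tfrac{1}{4}$ to obtain $C=1/C_0(\tfrac{1}{4})$. The paper simply compresses the Hölder/Cauchy--Schwarz and spectral-decomposition steps by citing the proof of Theorem \ref{thm: deterministic result}, which you have unfolded verbatim.
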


\begin{proof}
Suppose that $R$ is sufficiently large, as required by Lemma \ref{lem: untemp spectral action}, so that the bounds for the spectral action of $B_{R,\lambda}$ hold. As in the proof of Theorem \ref{thm: deterministic result}, we can use Lemma \ref{lem: untemp spectral action} to see that 
	\begin{align*}
		\varepsilon^2 \leq \|B_{R,\lambda}\|_{L^1(X)\to L^\infty(X)}\varepsilon \mathrm{Vol}(E).
	\end{align*}
The operator norm of $B_{R,\lambda}$ is controlled as in equation \eqref{eq: B_R,lambda norm} by
	\begin{align*}
		\|B_{R,\lambda}\|_{L^1(X)\to L^\infty(X)} \leq C(X)C_0(\delta) e^{\frac{1}{2}(2\delta -1-\sqrt{\sigma})R},
	\end{align*}
for any $\delta>0$. Set $\delta=\frac{1}{4}$, then we obtain the lower bound
	\begin{align*}
		\mathrm{Vol}(E) \geq \frac{C\varepsilon}{C(X)}e^{(\frac{1}{4}+\frac{1}{2}\sqrt{\sigma})R},
	\end{align*}
where $C = \frac{1}{C_0(\frac{1}{4})}$.
\end{proof}
Combining Theorems \ref{thm: deterministic result} and \ref{thm: det result untemp} then gives the deterministic result Theorem \ref{thm: det thm 1}. Theorem \ref{thm: random result} is then obtained by using Theorem \ref{thm: GLMST19} to probabilistically set $R(X) = c\log(g)$ and $C(X) = \frac{1}{\min\{1,\mathrm{InjRad}(X)\}}$ in Theorem \ref{thm: det result untemp}. 

\section{Acknowledgements}
The author would like to thank Etienne Le Masson and Tuomas Sahlsten for helpful discussions regarding the results presented here. In addition, the author is grateful to Nalini Anantharaman and Laura Monk, as well as Universit\'{e} de Strasbourg for the hospitality during a research visit in February 2020, and for useful comments leading to an improvement on an earlier version of this work. The author also extends thanks to the anonymous referee, whose suggestions improved the presentation of the results.

\bibliographystyle{abbrv}

\end{document}